%

%
%
\documentclass[leqno]{siamltex704}
\usepackage{amssymb,amsmath,graphicx,amscd,mathrsfs}
\usepackage{color,xcolor,amsmath}
\usepackage{amsmath}
\usepackage{graphicx}
\usepackage{mathrsfs}
\usepackage{float}
\usepackage{amsfonts,amssymb}
\usepackage{dsfont}
\usepackage{pifont}
\usepackage{hyperref}
\usepackage{multirow}
\numberwithin{equation}{section}
\def\3bar{{|\hspace{-.02in}|\hspace{-.02in}|}}
\def\E{{\mathcal{E}}}
\def\T{{\mathcal{T}}}

\def\bQ{{\mathbf{Q}}}

\def\b0{\boldsymbol{0}}

\def\bw{{\mathbf{w}}}
\def\bu{{\mathbf{u}}}
\def\bv{{\mathbf{v}}}
\def\bn{{\mathbf{n}}}
\def\be{{\mathbf{e}}}
\def\bf{{\mathbf{f}}}

\def\bg{{\mathbf{g}}}

\newtheorem{algorithm1}{Weak Galerkin Algorithm}




\setlength{\parindent}{0.25in} \setlength{\parskip}{0.08in}

\title{A Weak Galerkin Finite Element Scheme for solving the stationary Stokes Equations}
\author{ Ruishu Wang\thanks{Department of Mathematics, Jilin University, Changchun,
China (ruishu@email.jlu.edu.cn). } \and Xiaoshen
Wang\thanks{Department of Mathematics, University of Arkansas at
Little Rock, Little Rock, AR 72204, United States(xxwang@ualr.edu).
} \and Qilong Zhai
\thanks{Department of Mathematics, Jilin University, Changchun,
China (diql13@mails.jlu.edu.cn). } \and Ran Zhang\thanks{Department
of Mathematics, Jilin University, Changchun, China
(zhangran@mail.jlu.edu.cn). The research of Zhang was supported in
part by China Natural National Science Foundation(11271157,
11371171, 11471141), and by the Program for New Century Excellent
Talents in University of Ministry of Education of China.}
 }

\begin{document}

\maketitle

\begin{abstract}
A weak Galerkin (WG) finite element method for solving the
stationary Stokes equations in {\color {black}two- or three- dimensional} spaces by
using discontinuous piecewise polynomials is {\color {black}developed} and analyzed.  The variational form
we considered is based on two gradient operators which is different
from the usual gradient-divergence operators. The WG method is
highly flexible by allowing the use of discontinuous functions on
arbitrary polygons or polyhedra with certain shape regularity.
Optimal-order error estimates are established for the corresponding
WG finite element solutions in various norms.{\color{black}
Numerical results are presented to illustrate the theoretical analysis
of the new WG finite element scheme for Stokes problems.}
\end{abstract}

\begin{keywords} weak Galerkin finite element methods,
weak gradient, Stokes equations, polytopal meshes.
\end{keywords}

\begin{AMS}
Primary, 65N30, 65N15, 65N12, 74N20; Secondary, 35B45, 35J50, 35J35
\end{AMS}

\section{Introduction}

The aim of this paper is to present a novel {\color {black} weak} Galerkin finite
element method for solving the stationary Stokes {\color {black} equations}. Let $\Omega$ be a polygonal or
polyhedral domain in $\mathbb{R}^d, d=2,3$. As a model for the flow
of an incompressible viscous fluid confined in $\Omega$, we consider
the following equations
\begin{eqnarray}
\label{ose1}-\mu\Delta \textbf{u}+\nabla p &=& \textbf{f}, \quad
{\rm in}\ \Omega,
\\
\label{ose2}\nabla \cdot\textbf{u}&=&0,\quad {\rm in}\ \Omega,
\\
\label{ose3}\textbf{u}&=&\textbf{g},\quad {\rm on}\ \partial\Omega,
\end{eqnarray}
for unknown velocity function $\bu$ and pressure function $p$ (we
require that $p$ has zero average in order to guarantee the
uniqueness of the pressure). Bold symbols are used to denote vector-
or tensor-valued functions or spaces of such functions. Here $\bf$
is a {\color {black} body source} term, $\mu>0$ is the kinematic viscosity and
$\bg$ is a boundary condition that satisfies the compatibility
condition
$$
\int_{\partial \Omega} \bg \cdot \bn~ ds=0,
$$
where $\bn$ is the unit outward normal vector on the domain boundary
$\partial \Omega$.

This problem mainly arises from approximations of
low-Reynolds-number flows. The finite element methods for Stokes and
Navier$-$Stokes problems enforce the {\color {black}\\divergence-free} property in
finite element spaces, which satisfy the inf-sup (LBB) condition, in
order for them to be numerically stable
\cite{BS08,babuska,GR86,Gunzburger89, ERS07}.
{\color{black}
The Stokes problem has been studied with various different new numerical methods:
\cite{CG05,KK06,Liu11,WangWangYe09,YangLiuLin15}.
}

Throughout this paper, we would follow the standard definitions for
Lebesgue and Sobolev spaces: $L^2(\Omega)$, $H^1(\Omega)$,
$[L^2(\Omega)]^d$,
$$
[H_0^1(\Omega)]^d=\{\bv\in [H^1(\Omega)]^d: \bv=\b0\ {\rm on}\
\partial \Omega\} $$ and
$$
L_0^2(\Omega):=\{q\in
L^2(\Omega): \int_{\Omega} q dx=0\}
$$
are the natural spaces for the weak form of the Stokes problem
\cite{GR86,BF91}. Denote $(\cdot,\cdot)$ for inner products in the
corresponding spaces.

{\color{black}Next we assume that $\mu=1$ and $\textbf{g}=\b0$. Then
one of the variational formulations for the Stokes problem
(\ref{ose1})-(\ref{ose3}) is to find $\bu \in [H_0^1(\Omega)]^d$ and
$p\in L_0^2(\Omega)$ such that
\begin{eqnarray}
\label{VF1}(\nabla\bu,\nabla\bv)-(\nabla\cdot\bv, p)&=&(\bf,\bv),
\\
\label{VF2}(\nabla\cdot \bu, q)&=&0,
\end{eqnarray}
for all $\bv\in[H_0^1(\Omega)]^d$ and $q\in L_0^2(\Omega)$. Here
$\nabla\bu$ denotes the velocity gradient tensor
$(\nabla\bu)_{ij}=\partial_j \bu_i$. It is well known that under our
assumptions on the domain and the data, problem
(\ref{VF1})-(\ref{VF2}) has a unique solution $(\bu;p)\in
[H_0^1(\Omega)]^d\times L_0^2(\Omega)$.}

{\color {black}{
For any $p \in L^2_0(\Omega)$, define a functional $\nabla p$ such that
\begin{eqnarray*}
\langle \nabla p,\bv\rangle=-(\nabla\cdot\bv,p),\quad \forall \bv\in [H_0^1(\Omega)]^d.
\end{eqnarray*}
}}
It is easy to know that the weak form (\ref{VF1})-(\ref{VF2}) is
also equivalent to the following variational problem: find
$(\bu;p)\in [H_0^1(\Omega)]^d\times L_0^2(\Omega)$ such that
\begin{eqnarray}
\label{VF3}(\nabla\bu,\nabla\bv)+\langle\nabla p,\bv\rangle&=&(\bf,\bv),
\\
\label{VF4}\langle\nabla q,\bu\rangle&=&0,
\end{eqnarray}
for all $\bv\in[H_0^1(\Omega)]^d$ and $q\in L_0^2(\Omega)$. The
unique solvability of (\ref{VF3})-(\ref{VF4}) follows directly from
that of the (\ref{VF1})-(\ref{VF2}).

The WG method refers to a general finite element technique for
partial differential equations where differential operators are
approximated as distributions for generalized functions. This method
was first proposed in \cite{WangYe2013,WangYe2014,mwy0927} for
second order elliptic problem, then extended to other partial
differential equations \cite{maxwell, mwyz-biharmonic, first,
MWY14JCP, ZZW15, ZZ15}. Weak functions and weak derivatives can be
approximated by polynomials with various degrees. The WG method uses
weak functions and their weak derivatives which are defined as
distributions. The most prominent features of it are:

\begin{itemize}
\item {The usual derivatives are replaced by distributions or discrete approximations of distributions.}
\item {The approximating functions are discontinuous. The flexibility of discontinuous functions gives WG
methods many advantages, such as high order of accuracy, high
parallelizability, localizability, and easy handling of complicated
geometries.}
\end{itemize}

The above features motivate the use of WG methods for the Stokes
equations. It can easily handle meshes with hanging nodes, elements
of general shapes with certain shape regularity and ideally suited
for hp-adaptivity. {\color{black} In \cite{WangYeStokes}, Wang et.
al. considered WG methods for {\color {black}{the Stokes equations}}
(\ref{VF1})-(\ref{VF2}). Similarly, in \cite{MWY14JCP}, they
presented WG methods for the Brinkman equations, which is a model
with a high-contrast parameter dependent combination of the Darcy
and Stokes models. The numerical method of  \cite{MWY14JCP} is based
on the traditional gradient-divergence variational form for the
Brinkman equations. In \cite{ZhaiZhangMu}, we presented a new WG
scheme based on the gradient-gradient variational form. It is shown
that this scheme is suit for the mixed formulation of Darcy which
would present a better approximation for this case. In fact, for
complex porous media with interface conditions, people often use
Brinkman-Stokes interface model to describe this problem, which is
an ongoing work for us now. In order to present a more efficient WG
scheme, we prefer to utilize this gradient-gradient weak form to
approximate the model. In order to unify the weak form of this
interface problem, we need the numerical analysis results of this
form for Stokes problem. However, to the best of our knowledge, the
numerical analysis of methods based on the variational form
(\ref{VF3})-(\ref{VF4}) has never been done before. Therefore in
this paper, we propose a WG method based on the weak form
(\ref{VF3})-(\ref{VF4}) of the primary problem. In addition, if we
choose high order polynomials to approximate the model and use Schur
complement to reduce the interior DOF of the velocity and pressure
by the boundary DOF, the total DOF of this new method is less than
the scheme of \cite{WangYeStokes}.}

The rest of this paper is organized as follows. In Section 2 we
shall introduce some preliminaries and notations for Sobolev spaces.
Section 3 is devoted to the definitions of weak functions and weak
derivatives. The WG finite element schemes for variational form of
the Stokes equation (\ref{VF3})-(\ref{VF4}) are presented in Section
4. This section also contains some local $L^2$ projection operators
and then derives some approximation properties which are useful in a
convergence analysis. In Section 5, we derive an error equation for
the WG finite element approximation. Optimal-order error estimates
for the WG finite element approximations are derived in Section 6 in
an $H^1$-equivalent norm for the velocity, and $L^2$ norm for both
the velocity and the pressure.  In Section 7, we present some
numerical results which confirm the theory developed in earlier
sections. Finally, we present some technical estimates in the
appendix for quantities related to the local $L^2$ projections into
various finite element spaces.

\section{Preliminaries and Notations}

Let $K\subset\Omega$ be an open bounded domain with Lipschitz
continuous boundary in $\mathbb{R}^d, d=2, 3$. We shall use standard
definitions of the Sobolev spaces $H^s(K)$ and inner products
$(\cdot, \cdot)_{s, K}$, their norms $\|\cdot\|_{s, K}$, and
seminorms $|\cdot|_{s, K}$, for any $s\ge 0$. For instance, for any
integer $s\ge 0$, the seminorm $|\cdot|_{s, K}$ is defined as
$$
|v|_{s, K}=\left(\sum_{|\alpha|=s}\int_K |\partial^\alpha v|^2 {\rm
d} K\right)^{\frac{1}{2}},
$$
with notations
$$
\alpha=(\alpha_1,\alpha_2, \cdots, \alpha_d), \quad
|\alpha|=\alpha_1+\alpha_2+\cdots+\alpha_d, \quad
\partial^\alpha=\prod_{j=1}^d \partial_{x_j}^{\alpha_j}.
$$
The Sobolev norm $\|\cdot\|_{m,K}$ is defined as
$$
\|v\|_{m, K}=\left(\sum_{j=0}^{m} |v|_{j, K}^2\right)^{\frac{1}{2}}.
$$

The space $H^0(K)$ is same as $L^2(K)$, whose norm and inner product are denoted by $\|\cdot\|_K$ and
$(\cdot,\cdot)_K$, respectively. If $K=\Omega$, we would drop the
subscript $K$ in the notations of the $L^2$ norm and the $L^2$ inner product.


\section{Weak Differential Operators}

In this section we will define weak functions for both the
vector-valued function and the scalar-valued function, also we will
introduce the weak gradients and the corresponding discrete forms.
\subsection{Weak gradient for weak vector-valued function}

Let $T$ be a polygonal or polyhedral domain with boundary $\partial T$.
A weak vector-valued function on the domain $T$ is defined as $\bv = \{\bv _0,\bv _b\}$
such that $\bv _0 \in [ L^2(T)]^d$ and $\bv_b \in [L^2(\partial T)]^d$. Let
\begin{eqnarray*}
V(T)=\{ \bv =\{ \bv _0 ,\bv_b \}; \bv_0 \in [L^2(T)]^d,\bv _b \in [L^2(\partial T)]^d \},
\end{eqnarray*}
where $\bv_b$ is not necessarily the trace of $\bv_0$.
\begin{definition}{\rm(\cite{WangYeStokes})}
For any $\bv \in V(T)$, the weak gradient of $\bv$, denoted by $\nabla_w\bv$, is defined as  a linear functional in the dual space of $[H^1(T)]^{d\times d}$ whose action on each $\tau\in [H^1(T)]^{d\times d} $ is given by
\begin{eqnarray*}
(\nabla_w\bv,\tau)_T=-(\bv_0,\nabla\cdot \tau)_T+\langle\bv_b,\tau\cdot \bn\rangle_{\partial T},~~~\forall \tau\in [H^1(T)]^{d\times d},
\end{eqnarray*}
where $\bn$ is the outer unit normal vector to $\partial T$, $(\bv_0,\nabla\cdot \tau)_T$ is the $L^2$ inner product of $\bv_0$ and $\nabla\cdot\tau$, and $\langle\bv_b,\tau\cdot \bn\rangle_{\partial T}$ is the inner product of  $\tau\cdot\bn$ and $\bv_b$ in $[L^2(\partial T)]^d$.
\end{definition}

Consider the inclusion map $i_V : [H^1(T)]^d\rightarrow V(T)$
defined below
  $$
     i_V (\phi)=\{\phi|_T,\phi|_{\partial T}\},  ~~~~ \phi\in[H^1(T)]^d.
  $$

By this map the Sobolev space $[H^1(T)]^d$ can be embedded into the space $V(T)$. With the help of map $i_V$, the Sobolev space $[H^1(T)]^d$ can be considered as a subspace of $V(T)$ by identifying each $\phi\in[H^1(T)]^d$ with $i_V(\phi)$.

Let $P_r(T)$ be the set of polynomials on T with degree no more than $r$.
\begin{definition}{\rm(\cite{WangYeStokes})}
The discrete weak gradient operator $\nabla_{w,r,T}$ is defined as
follows: for each {$\bv\in V(T)$}, $\nabla_{w,r,T}\bv
\in[P_r(T)]^{d\times d}$ is the unique element such that
\begin{eqnarray}\label{dwvg}
(\nabla_{w,r,T}\bv,\tau)_T=-(\bv_0,\nabla\cdot \tau)_T+\langle\bv_b,\tau\cdot \bn\rangle_{\partial T},~~~~\forall \tau\in[P_r(T)]^{d\times d}.
\end{eqnarray}
\end{definition}

\subsection{Weak gradient for weak scalar-valued function}
We define a weak scalar-valued function on the domain $T$ as $q = \{q _0,q _b\}$ such that $q _0 \in L^2(T)$ and $q_b \in L^2(\partial T)$. Let
\begin{eqnarray*}
W(T)=\{q =\{q _0 ,q_b \}; q_0 \in L^2(T),q _b \in L^2(\partial T)\},
\end{eqnarray*}
where $q_b$ is not necessarily the trace of $q_0$.

\begin{definition}{{\rm(\cite{WangYe2013})}}
For any $q \in W(T)$, the weak gradient of $q$, denote by $\widetilde{\nabla}_wq$, is defined as  a linear functional in the dual space of $[H^1(T)]^2$ whose action on each $\bw\in [H^1(T)]^2 $ is given by
\begin{eqnarray*}
(\widetilde{\nabla}_w q,\bw)_T=-(q_0,\nabla\cdot\bw)_T+\langle q_b,\bw\cdot \bn\rangle_{\partial T},~~~\forall \bw\in [H^1(T)]^2,
\end{eqnarray*}
where $\bn$ is the outer unit normal vector to $\partial T$, $(q_0,\nabla\cdot\bw)_T$ is the $L^2$ inner product of $q_0$ and $\nabla\cdot\bw$, and $\langle q_b,\bw\cdot \bn\rangle_{\partial T}$ is the inner product of  $\bw\cdot \bn$ and  $q_b$ in $L^2(\partial T)$.
\end{definition}

Consider the inclusion map $i_W : H^1(T)\rightarrow W(T)$ defined as follows
  $$
     i_W (\phi)=\{\phi|_T,\phi|_{\partial T}\},  ~~~~ \phi\in H^1(T).
  $$
By which the Sobolev space $ H^1(T)$ is embedded into the space
$W(T)$.  With the help of map $i_W$, the Sobolev space $H^1(T)$ can
be considered as a subspace of $W(T)$ by identifying each $\phi\in
H^1(T)$ with $i_W(\phi)$.
\begin{definition}{{\rm(\cite{WangYe2013})}}
The discrete weak gradient operator $\widetilde{\nabla}_{w,r,T}$ is
defined as follows: for each {$q\in W(T)$},
$\widetilde{\nabla}_{w,r,T}q \in[P_r(T)]^d$ is the unique element
such that
\begin{eqnarray}\label{dwqg}
(\widetilde{\nabla}_{w,r,T}q,\bw)_T=-(q_0,\nabla\cdot\bw)_T+\langle q_b,\bw\cdot \bn\rangle_{\partial T},~~~~\forall \bw\in[P_r(T)]^d.
\end{eqnarray}
\end{definition}

\section{A Weak Galerkin Finite Element Scheme}

Let $\mathcal{T}_h$ be a partition of the domain $\Omega$ into
polygons in 2D or polyhedral in 3D. Assume that $\mathcal{T}_h$ is
shape regular in the sense as defined in {\rm\cite{first}}.
Denote by $\mathcal{E}_h$ the set of all edges or flat faces in
$\mathcal{T}_h$, and let
$\mathcal{E}_h^0=\mathcal{E}_h\setminus\partial \Omega$ be the set
of all interior edges or flat faces. Denote by $h_T$ the diameter of
$T\in \mathcal{T}_h$ and $h=\max_{T\in \mathcal{T}_h}h_T$ the
meshsize for the partition $\mathcal{T}_h$.

For any interger $k\ge 1$, we define weak Galerkin finite element
spaces as follows: for velocity variable, let
$$
V_h=\{ \bv=\{\bv_0,\bv_b\}; \{\bv_0,\bv_b\} |_T\in[P_k(T)]^d\times
[P_k(e)]^d,e\subset \partial T,\bv_b=\b0  ~\rm{on}~  \partial
\Omega\}.
$$

It should be noticed that $\bv_b$ is single valued on each edge $e\subset \mathcal{E}_h$. For pressure variable, we define
$$
W_h=\left\{ q=\{q_0,q_b\}; \sum_{T\in \mathcal{T}_h}\int_T q_0 dT=0, \{q_0,q_b\} |_T\in P_{k-1}(T)\times P_k(e), e\subset \partial T\right\}.
$$
Also $q_b$ is single valued on each edge $e\subset \mathcal{E}_h$.

The discrete weak gradients $\nabla_{w,k-1}$ and $\widetilde{\nabla}_{w,k}$ on the spaces $V_h$ and $W_h$ can be computed by the equations (\ref{dwvg}) and (\ref{dwqg}) on each element $T$ respectively, that is,
\begin{eqnarray*}
(\nabla_{w,k-1}\bv)|_T&=&\nabla_{w,k-1,T}(\bv|_T), \quad{\forall \bv \in V_h},
\\
(\widetilde{\nabla}_{w,k}q)|_T&=&\widetilde{\nabla}_{w,k,T}(q|_T),~~~\quad{\forall q \in W_h}.
\end{eqnarray*}
For the sake of simplicity, we shall drop the subscripts $k-1$ and $k$ of $\nabla_{w,k-1}$ and $\widetilde{\nabla}_{w,k}$ in the rest of the paper.

We use the $L^2$ inner product to denote the sum of inner products on each of the elements as follows:
\begin{eqnarray*}
(\nabla_w\bv,\nabla_w\bw)&=&\sum_{T\in\T_h}(\nabla_w\bv,\nabla_w\bw)_T,
\\
(\widetilde{\nabla}_wq,\bv)&=&\sum_{T\in T_h}(\widetilde{\nabla}_wq,\bv)_T.
\end{eqnarray*}

\begin{lemma}{{\rm(\cite{WangYeStokes})}}
For any $\bv\in V_h$ and $p\in W_h$ the following equations hold true
\begin{eqnarray}
\label{ifw1}(\nabla_w \bv,\tau)_T&=&(\nabla\bv_0,\tau)_T-\langle\bv_0-\bv_b,\tau\cdot\bn\rangle_{\partial T},~~~~\forall \tau\in[P_{k-1}(T)]^{d\times d},
\\
\label{ifw2}(\widetilde{\nabla}_w p,\bw)_T&=&(\nabla p_0,\bw)_T-\langle p_0-p_b,\bw\cdot\bn\rangle_{\partial T},~~~\forall \bw\in[P_k(T)]^d.
\end{eqnarray}
\end{lemma}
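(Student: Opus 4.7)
The plan is to prove both identities by applying integration by parts on each element $T$ to the defining equations (\ref{dwvg}) and (\ref{dwqg}) of the discrete weak gradients, and then regrouping boundary terms. Since $\bv_0 \in [P_k(T)]^d$ and $p_0 \in P_{k-1}(T)$ are smooth polynomials on $T$, the classical Green's formula applies without issue; the only thing to check is that the test functions $\tau \in [P_{k-1}(T)]^{d\times d}$ and $\bw \in [P_k(T)]^d$ are admissible in the definitions, which is immediate because $[P_{k-1}(T)]^{d\times d} \subset [H^1(T)]^{d\times d}$ (this is the range space used in defining $\nabla_{w,k-1}$), and similarly for $\bw$.

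For identity (\ref{ifw1}), I would start from
\begin{equation*}
(\nabla_w \bv,\tau)_T = -(\bv_0,\nabla\cdot \tau)_T + \langle\bv_b,\tau\cdot \bn\rangle_{\partial T},
\end{equation*}
then apply integration by parts to the first term, using $\bv_0 \in [P_k(T)]^d \subset [H^1(T)]^d$:
\begin{equation*}
-(\bv_0,\nabla\cdot \tau)_T = (\nabla\bv_0,\tau)_T - \langle\bv_0,\tau\cdot \bn\rangle_{\partial T}.
\end{equation*}
Substituting back and combining the two boundary terms into $-\langle\bv_0-\bv_b,\tau\cdot\bn\rangle_{\partial T}$ yields (\ref{ifw1}).

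The proof of (\ref{ifw2}) is identical in structure. Starting from (\ref{dwqg}) with $q$ replaced by $p$, I would integrate $(p_0,\nabla\cdot\bw)_T$ by parts against $\bw \in [P_k(T)]^d$, obtaining $-(\nabla p_0,\bw)_T + \langle p_0,\bw\cdot\bn\rangle_{\partial T}$, and then merge with $\langle p_b,\bw\cdot\bn\rangle_{\partial T}$ to get $-\langle p_0-p_b,\bw\cdot\bn\rangle_{\partial T}$.

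There is no substantive obstacle here: the lemma is purely an integration-by-parts reformulation of the definitions, and no uniqueness, stability, or approximation argument is required. The only minor technicality to flag is ensuring that the polynomial test spaces used here are subspaces of the $H^1$-regular spaces in Definitions of $\nabla_{w,r,T}$ and $\widetilde{\nabla}_{w,r,T}$, so that the defining equations can indeed be invoked with $\tau \in [P_{k-1}(T)]^{d\times d}$ and $\bw \in [P_k(T)]^d$ as test functions.
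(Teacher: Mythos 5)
Your proof is correct: the paper itself states this lemma without proof (citing Wang and Ye), and your integration-by-parts argument applied to the defining relations (\ref{dwvg}) and (\ref{dwqg}) is exactly the standard derivation used in that reference. One tiny simplification: since the discrete weak gradients $\nabla_{w,k-1,T}$ and $\widetilde{\nabla}_{w,k,T}$ are defined by testing against $[P_{k-1}(T)]^{d\times d}$ and $[P_k(T)]^d$ directly, the admissibility of these test functions is built into the definitions and needs no $H^1$-containment check.
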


For each element $T\in \T_h$, denote by $Q_0$ the $L^2$ projection
operator from $[L^2(T)]^d$ onto $[P_k(T)]^d$. For each edge or face
$e\in \E_h$, denote by $Q_b$ the $L^2$ projection from $[L^2(e)]^d$
onto $[P_{k}(e)]^d$.We shall combine $Q_0$ with $Q_b$ as a projection onto $V_h$, such that
on each element $T\in\mathcal{T}_h$
\begin{eqnarray*}
Q_h\bu=\{Q_0\bu,Q_b\bu\}.
\end{eqnarray*}
{\color {black}On each element $T\in \T_h$, denote by $\bQ_h$ the
$L^2$ projection onto $[P_{k-1}(T)]^{d\times d}$. Denote by
$\widetilde{Q}_0$ the $L^2$ projection operator from $L^2(T)$ onto
$P_{k-1}(T)$. For each edge or face $e\in \E_h$, denote by
$\widetilde{Q}_b$ the $L^2$ projection from $L^2(e)$ onto
$P_{k}(e)$.  {We shall combine $\widetilde{Q}_0$ with
$\widetilde{Q}_b$ as a projection onto space $W_h$, such that on
each element $T\in\mathcal{T}_h$
\begin{eqnarray*}
\widetilde{Q}_h q=\{\widetilde{Q}_0 q, \widetilde{Q}_b q\}.
\end{eqnarray*}}}

Then we shall present a useful property which indicates the discrete weak gradient operators are good approximation to the gradient operators in the classical sense.
\begin{lemma}{{\rm(\cite{WangYeStokes})}}
The following equations hold true.
\begin{eqnarray}
\label{qcfv}\nabla_w Q_h\bv&=&\bQ_h\nabla\bv,~~~\forall\bv\in [H^1(\Omega)]^d,
\\
\label{qcfp}\widetilde{\nabla}_w\widetilde{Q}_hp&=&Q_0\nabla p,~~~~\forall p \in H^1(\Omega).
\end{eqnarray}
\end{lemma}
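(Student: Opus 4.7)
The plan is to verify both identities element-by-element by exploiting the characterizing property of the $L^2$ projections, using the definitions (\ref{dwvg}) and (\ref{dwqg}) of the discrete weak gradients. The key observation is that the test spaces that define $\nabla_w$ and $\widetilde{\nabla}_w$ produce divergences and normal traces whose polynomial degrees are low enough to be exactly matched by the $L^2$ projection subspaces used in $Q_h$ and $\widetilde{Q}_h$. Integration by parts then reduces everything to the classical gradient, and a final projection identity transfers the result to $\bQ_h \nabla \bv$ or $Q_0 \nabla p$.

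For the first identity (\ref{qcfv}), I would fix an element $T$ and any $\tau \in [P_{k-1}(T)]^{d\times d}$, and plug $Q_h \bv = \{Q_0 \bv, Q_b \bv\}$ into the defining equation (\ref{dwvg}). Since $\nabla\cdot\tau \in [P_{k-2}(T)]^d \subset [P_k(T)]^d$ and $\tau\cdot\bn \in [P_{k-1}(e)]^d \subset [P_k(e)]^d$ on every face $e\subset\partial T$, the defining properties of $Q_0$ and $Q_b$ give $(Q_0 \bv,\nabla\cdot\tau)_T = (\bv,\nabla\cdot\tau)_T$ and $\langle Q_b \bv,\tau\cdot\bn\rangle_{\partial T} = \langle \bv,\tau\cdot\bn\rangle_{\partial T}$. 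Integration by parts, valid since $\bv \in [H^1(T)]^d$, turns the right-hand side of (\ref{dwvg}) into $(\nabla\bv,\tau)_T$, and finally the defining property of $\bQ_h$ replaces this by $(\bQ_h\nabla\bv,\tau)_T$. Since $\tau \in [P_{k-1}(T)]^{d\times d}$ is arbitrary and both $\nabla_w Q_h\bv$ and $\bQ_h\nabla\bv$ lie in that space, the two must coincide.

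The second identity (\ref{qcfp}) follows the same pattern, applied element-by-element with test functions $\bw \in [P_k(T)]^d$ in (\ref{dwqg}). Now $\nabla\cdot\bw \in P_{k-1}(T)$, which is exactly the target space of $\widetilde{Q}_0$, and $\bw\cdot\bn \in P_k(e)$ is exactly the target space of $\widetilde{Q}_b$; hence the two $L^2$ projections in $\widetilde{Q}_h p$ can be discarded when paired with these quantities. Integration by parts yields $(\nabla p,\bw)_T$, and the defining property of $Q_0$ on $[P_k(T)]^d$ converts this to $(Q_0\nabla p,\bw)_T$, establishing $\widetilde{\nabla}_w \widetilde{Q}_h p = Q_0 \nabla p$.

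There is no real obstacle here; the proof is essentially a bookkeeping check on polynomial degrees. The only point worth stating carefully is the degree matching: one must see that $\widetilde{Q}_b$ is taken on $P_k(e)$ (and not $P_{k-1}(e)$) precisely so that it exactly eliminates on the boundary term in (\ref{dwqg}), and similarly that the choice $k-1$ in the range of $\nabla_{w}$ on $V_h$ is calibrated to the polynomial spaces used for $Q_0$, $Q_b$, and $\bQ_h$. Once that compatibility is observed, both identities are immediate.
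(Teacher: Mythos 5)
Your proof is correct and is the standard argument for this commutativity lemma: the paper itself does not reproduce a proof (it cites \cite{WangYeStokes} and \cite{WangYe2013}), and the argument given there is exactly your element-by-element computation using the orthogonality of the $L^2$ projections against the lower-degree quantities $\nabla\cdot\tau$, $\tau\cdot\bn$, $\nabla\cdot\bw$, $\bw\cdot\bn$, followed by integration by parts. The only implicit hypothesis worth flagging is that $\bn$ is constant on each face (the partition uses edges or flat faces), which is what puts $\tau\cdot\bn$ and $\bw\cdot\bn$ into the polynomial trace spaces you need.
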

%

Now we introduce four bilinear forms as follows:
\begin{eqnarray}
\label{bf2}s(\bw,\bv)&=&\sum_{T\in\mathcal{T}_h}h_T^{-1}\langle \bw_0-\bw _b,\bv_0-\bv_b\rangle_{\partial T},
\\
\label{bf1}a(\bw,\bv)&=&(\nabla_w\bw,\nabla_w\bv)+s(\bw,\bv),
\\
\label{bf3}b(\bw,q)&=&(\bw_0,\widetilde{\nabla}_w q),
\\
\label{bf4}c(\rho,q)&=&\sum_{T\in\mathcal{T}_h}h_T\langle \rho_0-\rho_b,q_0-q_b\rangle_{\partial T}.
\end{eqnarray}
Using these bilinear forms we define the following two norms. For any $\bv\in V_h^0 $ and $q\in W_h$,
\begin{eqnarray}
\label{wn1}\3bar\bv\3bar^2&=&a(\bv,\bv)=(\nabla_w\bv,\nabla_w\bv)+\sum_{T\in\mathcal{T}_h}h_T^{-1}\langle \bv_0-\bv _b,\bv_0-\bv_b\rangle_{\partial T},
\end{eqnarray}
and
\begin{eqnarray}
\label{wn3}\3bar q\3bar_0^2&=&\|q_0\|^2+\3bar q\3bar_*^2,
\end{eqnarray}
where $\3bar q\3bar^2_*=c(q,q)$ is a seminorm.

It is easy to verify that $\3bar\cdot\3bar$ and $\3bar\cdot\3bar_0$ are norms in $V_h$ and $W_h$, respectively,
\begin{algorithm1}
A numerical approximation for (\ref{ose1})-(\ref{ose3}) can be
obtained by seeking $\bu_h=\{\bu_0,\bu_b\}\in V_h$ and
$p_h=\{p_0,p_b\}\in W_h$ such that
\begin{eqnarray}
\label{wf1}a(\bu_h,\bv)+b(\bv,p_h)&=&(\bf,\bv_0),
\\
\label{wf2}b(\bu_h,q)-c(p_h,q)&=&0,
\end{eqnarray}
for all $\bv=\{\bv_0,\bv_b\}\in V_h$ and $q\in W_h$.
\end{algorithm1}

Next we shall show that the weak Galerkin finite element algorithm
(\ref{wf1})-(\ref{wf2}) has only one solution. Since the system is
linear, it suffices to show that if $\bf=\textbf{0}$, the only
solution is $\bu_h=\{\b0,\b0\}; p_h=\{0,0\}$.
\begin{lemma}
The WG finite element scheme (\ref{wf1})-(\ref{wf2}) has a unique solution.
\end{lemma}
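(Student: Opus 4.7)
Since the scheme (\ref{wf1})--(\ref{wf2}) is a square linear system in finite dimension, uniqueness implies existence, so it is enough to prove uniqueness. As the authors remark, this reduces to showing that if $\bf=\b0$, then the only solution is $\bu_h=\{\b0,\b0\}$ and $p_h=\{0,0\}$. My plan is a standard energy argument: test (\ref{wf1}) with $\bv=\bu_h$ and (\ref{wf2}) with $q=p_h$, then add. The cross terms $b(\bu_h,p_h)$ cancel, and I obtain
\begin{equation*}
a(\bu_h,\bu_h)+c(p_h,p_h)=0,
\end{equation*}
which, by non-negativity of each piece, forces $\3bar\bu_h\3bar=0$ and $\3bar p_h\3bar_*=0$. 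Since $\3bar\cdot\3bar$ is a norm on $V_h$, this immediately yields $\bu_h=\{\b0,\b0\}$. From $\3bar p_h\3bar_*^2=\sum_T h_T\|p_0-p_b\|_{\partial T}^2=0$ I get the crucial fact that $p_0=p_b$ on $\partial T$ for every $T\in\mathcal{T}_h$.

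Next I exploit (\ref{wf1}) again, now with $\bu_h=\b0$: this reduces to $b(\bv,p_h)=(\bv_0,\widetilde\nabla_w p_h)=0$ for all $\bv\in V_h$. Because $\widetilde\nabla_w p_h|_T\in[P_k(T)]^d$, I may pick $\bv_0|_T=\widetilde\nabla_w p_h|_T$ on each $T$ and $\bv_b=\b0$ on all edges; this is a legitimate element of $V_h$. Summing over elements gives
\begin{equation*}
\sum_{T\in\mathcal{T}_h}\|\widetilde\nabla_w p_h\|_T^2=0,
\end{equation*}
so $\widetilde\nabla_w p_h\equiv 0$ on each $T$.

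Now I invoke the integration-by-parts identity (\ref{ifw2}): for every $\bw\in[P_k(T)]^d$,
\begin{equation*}
0=(\widetilde\nabla_w p_h,\bw)_T=(\nabla p_0,\bw)_T-\langle p_0-p_b,\bw\cdot\bn\rangle_{\partial T}.
\end{equation*}
The boundary term vanishes by the first step, so $(\nabla p_0,\bw)_T=0$ for every $\bw\in[P_k(T)]^d$. Choosing $\bw=\nabla p_0\in[P_{k-2}(T)]^d\subset[P_k(T)]^d$ shows $\nabla p_0=0$ on $T$, i.e.\ $p_0|_T$ is a constant $c_T$.

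The final step — which I expect to be the only slightly subtle point — is to conclude $p_h=0$ globally. For any interior edge $e$ shared by $T_1$ and $T_2$, the relation $p_0=p_b$ on $\partial T_i$ forces $p_b|_e=c_{T_1}=c_{T_2}$, so the constants agree across every interior face. By connectedness of $\mathcal{T}_h$, there is a single global constant $c$ with $p_0\equiv c$. The zero-mean constraint $\sum_T\int_T p_0\,dT=0$ built into $W_h$ then gives $c|\Omega|=0$, hence $p_0\equiv 0$; returning to $p_0=p_b$ on $\partial T$ yields $p_b\equiv 0$ as well. This completes the proof of uniqueness.
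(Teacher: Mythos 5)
Your argument is correct and follows essentially the same route as the paper: the energy identity $a(\bu_h,\bu_h)+c(p_h,p_h)=0$, then $\bu_h=\b0$ from the norm property, then $b(\bv,p_h)=0$ forcing $\widetilde\nabla_w p_h=0$ and hence $\nabla p_0=0$ via (\ref{ifw2}), and finally the gluing of the elementwise constants plus the zero-mean constraint (a step you spell out more carefully than the paper does). One trivial wording slip: to cancel the cross terms you must \emph{subtract} (\ref{wf2}) from (\ref{wf1}), not add them, though the identity you display is the correct one.
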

\begin{proof}
Let $\bf=\textbf{0}$, we shall show that the solution of
(\ref{wf1})-(\ref{wf2}) is trivial. To this end, taking $\bv=\bu_h$
and $q=p_h$ and subtracting (\ref{wf2}) from (\ref{wf1}) we arrive
at
$$
a(\bu_h,\bu_h)+c(p_h,p_h)=0.
$$
By the definition of $a(\cdot,\cdot)$ and $c(\cdot,\cdot)$, we know
$\nabla_w\bu_h=0$ on each $T\in\mathcal{T}_h$, $ \bu_0=\bu_b$, and
$p_0=p_b$ on each $\partial T$. Thus $\bu_0$ and $q_0$ are
continuous.

By (\ref{ifw1}) and the fact that $\bu_b=\bu_0$ on $\partial T$ we have, for any $\tau\in[P_{k-1}(T)]^{d\times d}$,
\begin{eqnarray*}
0&=&(\nabla_w\bu_h,\tau)_T
\\
&=&(\nabla\bu_0,\tau)_T-\langle\bu_0-\bu_b,\tau\cdot\bn\rangle_{\partial T}
\\
&=&(\nabla\bu_0,\tau)_T,
\end{eqnarray*}
which implies $\nabla\bu_0=0$ on each $T\in\mathcal{T}_h$ and thus
$\bu_0$ is a constant. Since $\bu_0=\bu_b$ on each $\partial T$ and
$\bu_b=\textbf0$ on $\partial\Omega$, we arrive at
$\bu_h=\{\b0,\b0\}$ in $\Omega$. It follows from (\ref{wf1}),
$\bu_h=\{\b0,\b0\}$, and $\bf=\b0$ that for any $\bv\in V_h$,
\begin{eqnarray*}
0&=&b(\bv,p_h)
\\
&=&(\bv_0,\widetilde{\nabla}_wp_h)
\\
&=&\sum_{T\in\mathcal{T}_h}(\bv_0,\nabla p_0)_T-\sum_{T\in\mathcal{T}_h}\langle\bv_0\cdot\bn,p_0-p_b\rangle_{\partial T}
\\
&=&\sum_{T\in\mathcal{T}_h}(\bv_0,\nabla p_0)_T.
\end{eqnarray*}
Hence we have $\nabla p_0=0$ on each $T\in\mathcal{T}_h$. Thus $p_0$
is a constant in $\Omega$. From $p_0\in L_0^2(\Omega)$, we would
obtain $p_0=0$ in $\Omega$. Since $p_b=p_0$ on each $\partial T$,
$p_b=0$.

This completes the proof of the lemma.
\end{proof}

\section{Error Equation}
In this section, we shall derive the error equations for the WG finite element solution we get from (\ref{wf1})-(\ref{wf2}). This error equation is essential for the following analysis.

Now we define two bilinear forms
\begin{eqnarray}
\label{poe1}l_1(\bw,\bv)&=&\sum_{T\in\mathcal{T}_h}\langle\bv_0-\bv_b,(\nabla\bw-\bQ_h\nabla\bw)\cdot\bn\rangle_{\partial T},
\\
\label{poe3}l_2(\bw,q)&=&\sum_{T\in\mathcal{T}_h}\langle q_0-q_b,(\bw-Q_0\bw)\cdot\bn\rangle_{\partial T},
\end{eqnarray}
for all $\bw\in[H^1(\Omega)]^d, \bv\in V_h$ and $q\in W_h$.

Let $(\bu;p)$ be the exact solution of (\ref{ose1})-(\ref{ose3}), and $(\bu_h;p_h)\in V_h\times W_h$ be the solution of (\ref{wf1})-(\ref{wf2}).

Define
$$
\be_h=Q_h\bu-\bu_h,~~~ \varepsilon_h=\widetilde{Q}_hp-p_h.
$$
We shall derive the error equations that $\be_h\in V_h$ and $\varepsilon_h\in W_h$ satisfy.
\begin{lemma}
Let $\bu_h\in V_h$ and $p_h\in W_h$ be the solution of the numerical scheme (\ref{wf1})-(\ref{wf2}), and $(\bu;p)$ be the exact solution of (\ref{ose1})-(\ref{ose3}). Then, for any $\bv\in V_h$ and $q\in W_h$ we have
\end{lemma}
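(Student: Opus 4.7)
The plan is to compute $a(Q_h\bu,\bv)+b(\bv,\widetilde{Q}_h p)$ and $b(Q_h\bu,q)-c(\widetilde{Q}_h p,q)$ explicitly in terms of the data $(\bf,\bu,p)$, and then subtract the numerical scheme (\ref{wf1})--(\ref{wf2}). Since the bilinear forms $a,b,c$ are linear in each argument and $\be_h=Q_h\bu-\bu_h$, $\varepsilon_h=\widetilde{Q}_h p-p_h$, this subtraction yields the error equations immediately, with $l_1(\bu,\bv)$, $l_2(\bu,q)$, and stabilization defects appearing on the right-hand sides.

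For the momentum equation I would start from $a(Q_h\bu,\bv)=(\nabla_w Q_h\bu,\nabla_w\bv)+s(Q_h\bu,\bv)$, apply (\ref{qcfv}) to replace $\nabla_w Q_h\bu$ by $\bQ_h\nabla\bu$, and then rewrite the first term via (\ref{ifw1}) with $\tau=\bQ_h\nabla\bu$:
\[
(\bQ_h\nabla\bu,\nabla_w\bv)_T=(\nabla\bv_0,\bQ_h\nabla\bu)_T-\langle\bv_0-\bv_b,\bQ_h\nabla\bu\cdot\bn\rangle_{\partial T}.
\]
Because $\nabla\bv_0\in[P_{k-1}(T)]^{d\times d}$, the $L^2$-projection identity gives $(\nabla\bv_0,\bQ_h\nabla\bu)_T=(\nabla\bv_0,\nabla\bu)_T$, and element-wise integration by parts produces $-(\bv_0,\Delta\bu)_T+\langle\bv_0,\nabla\bu\cdot\bn\rangle_{\partial T}$. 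Summing over $T$, I insert $-\bv_b$ into the boundary sum at no cost, using single-valuedness of $\bv_b$ and of $\nabla\bu\cdot\bn$ across interior faces together with $\bv_b=\b0$ on $\partial\Omega$. The outcome is $(\bQ_h\nabla\bu,\nabla_w\bv)=-(\Delta\bu,\bv_0)+l_1(\bu,\bv)$. For the pressure term, (\ref{qcfp}) gives $b(\bv,\widetilde{Q}_h p)=(\bv_0,Q_0\nabla p)=(\bv_0,\nabla p)$. Combining with the strong form $-\Delta\bu+\nabla p=\bf$ produces $a(Q_h\bu,\bv)+b(\bv,\widetilde{Q}_h p)=(\bf,\bv_0)+l_1(\bu,\bv)+s(Q_h\bu,\bv)$, and subtracting (\ref{wf1}) yields the first error identity.

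For the continuity equation I would apply (\ref{ifw2}) to $b(Q_h\bu,q)=(Q_0\bu,\widetilde{\nabla}_w q)$ with $\bw=Q_0\bu$. Since $\nabla q_0\in[P_k(T)]^d$ lies in the range of $Q_0$, the projection property turns the volume term into $(\nabla q_0,\bu)_T$. Element-wise integration by parts together with $\nabla\cdot\bu=0$, single-valuedness of $q_b$ across interior faces, and $\bu=\b0$ on $\partial\Omega$ collapses the boundary sum to $\sum_T\langle q_0-q_b,(\bu-Q_0\bu)\cdot\bn\rangle_{\partial T}=l_2(\bu,q)$. Hence $b(Q_h\bu,q)=l_2(\bu,q)$, and subtracting (\ref{wf2}) delivers the second error identity, with $-c(\widetilde{Q}_h p,q)$ left on the right.

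The main bookkeeping obstacle is the careful edge-by-edge treatment of the boundary integrals produced by integration by parts: each $\sum_T\langle\cdot,\cdot\rangle_{\partial T}$ must be converted into an expression involving the jumps $\bv_0-\bv_b$ or $q_0-q_b$ so that it matches the definitions of $l_1$ and $l_2$. This step rests crucially on the single-valuedness of the boundary components $\bv_b,q_b$ across interior faces, the single-valuedness of the conormal traces of the smooth exact solution, and the homogeneous Dirichlet condition on $\partial\Omega$; without any one of these, the right-hand sides would not reduce to the clean consistency functionals $l_1$ and $l_2$.
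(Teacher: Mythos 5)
Your proposal is correct and follows essentially the same route as the paper: use the commutativity identities (\ref{qcfv})--(\ref{qcfp}) and the representations (\ref{ifw1})--(\ref{ifw2}) to express $a(Q_h\bu,\bv)+b(\bv,\widetilde{Q}_hp)$ and $b(Q_h\bu,q)-c(\widetilde{Q}_hp,q)$ in terms of the data plus the consistency terms $s(Q_h\bu,\bv)$, $l_1(\bu,\bv)$, $l_2(\bu,q)$, then subtract the scheme. The only cosmetic difference is that you integrate by parts on $(\nabla\bv_0,\nabla\bu)_T$ directly, whereas the paper tests the strong equation with $\bv_0$ and integrates by parts there; the cancellations via single-valuedness of $\bv_b$, $q_b$ and the homogeneous boundary data are identical.
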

\begin{eqnarray}
\label{ee1}a(\be_h,\bv)+b(\bv,\varepsilon_h)&=&s(Q_h\bu,\bv)+l_1(\bu,\bv),
\\
\label{ee2}b(\be_h,q)-c(\varepsilon_h,q)&=&l_2(\bu, q)-c(\widetilde{Q}_hp,q).
\end{eqnarray}
\begin{proof}
First, from (\ref{ifw1}) and the property (\ref{qcfv}) we obtain
\begin{equation*}
\begin{aligned}
(\nabla_wQ_h\bu,\nabla_w\bv)_T=&(\bQ_h\nabla\bu,\nabla_w\bv)_T
\\
=&(\nabla\bv_0,\bQ_h\nabla\bu)_T-\langle\bv_0-\bv_b,(\bQ_h\nabla\bu)\cdot\bn\rangle_{\partial T}
\\
=&(\nabla\bv_0,\nabla\bu)_T-\langle\bv_0-\bv_b,(\bQ_h\nabla\bu)\cdot\bn\rangle_{\partial T}.
\end{aligned}
\end{equation*}
Summing over all elements $T\in\mathcal{T}_h$, we have
\begin{eqnarray}\label{eefa1}
(\nabla_w Q_h\bu,\nabla_w\bv)=(\nabla\bu,\nabla\bv_0)-\sum_{T\in\mathcal{T}_h}\langle\bv_0-\bv_b,(\bQ_h\nabla\bu)\cdot\bn\rangle_{\partial T}.
\end{eqnarray}
From the commutative property (\ref{qcfp}) we arrive at
\begin{eqnarray}\label{eefb1}
b(\bv,\widetilde{Q}_hp)=(\bv_0,\widetilde{\nabla}_w\widetilde{Q}_hp)=(\bv_0,\nabla p).
\end{eqnarray}
It follows from (\ref{ifw2}) that
\begin{equation*}
\begin{aligned}
(Q_0\bu,\widetilde{\nabla}_wq)_T=&(\nabla q_0,Q_0\bu)_T-\langle q_0-q_b,Q_0\bu\cdot\bn\rangle_{\partial T}
\\
=&(\nabla q_0,\bu)_T-\langle q_0-q_b,Q_0\bu\cdot\bn\rangle_{\partial T}.
\end{aligned}
\end{equation*}
Summing over all $T\in\mathcal{T}_h$ yields
\begin{equation}\label{eefb2}
\begin{aligned}
b(Q_h\bu,q)&=(Q_0\bu,\widetilde{\nabla}_w q)
\\
&=(\bu,\nabla q_0)-\sum_{T\in\mathcal{T}_h}\langle q_0-q_b,Q_0\bu\cdot\bn\rangle_{\partial T}.
\end{aligned}
\end{equation}

Next, using $\bv_0$ in $\bv=\{\bv_0,\bv_b\}\in V_h$ to test (\ref{ose1}), we have
\begin{eqnarray*}
(-\Delta \bu,\bv_0)+(\bv_0, \nabla p)&=&(\bf,\bv_0).
\end{eqnarray*}
Integrating by parts, we obtain
\begin{eqnarray}
\label{eefa2}(\nabla\bu,\nabla\bv_0)+(\bv_0, \nabla p)&=&(\bf,\bv_0)+\sum_{T\in\mathcal{T}_h}\langle\nabla\bu\cdot\bn,\bv_0-\bv_b\rangle_{\partial T},
\end{eqnarray}
where we have used the fact that $\sum_{T\in\mathcal{T}_h}\langle\bv_b,\nabla\bu\cdot\bn\rangle_{\partial T}=0$.
Using $q_0$ in $q=\{q_0,q_b\}\in W_h$ to test (\ref{ose2}), we arrive at
\begin{eqnarray*}
(\nabla\cdot\bu,q_0)=0.
\end{eqnarray*}
Using the fact that
$\sum_{T\in\mathcal{T}_h}\langle\bu\cdot\bn,q_b\rangle_{\partial
T}=0$ one has
\begin{equation}\label{eefc1}
\begin{aligned}
0=&(\nabla\cdot\bu,q_0)
\\
=&-(\bu,\nabla q_0)+\sum_{T\in\mathcal{T}_h}\langle\bu\cdot\bn,q_0\rangle_{\partial T}
\\
=&-(\bu,\nabla
q_0)+\sum_{T\in\mathcal{T}_h}\langle\bu\cdot\bn,q_0-q_b\rangle_{\partial
T}.
\end{aligned}
\end{equation}

Finally combining the equations (\ref{eefa1}) and (\ref{eefb1}) with (\ref{eefa2}) yields
\begin{equation}\label{eefl2}
\begin{aligned}
a(Q_h\bu,\bv)+b(\bv,\widetilde{Q}_hp)=&(\nabla_w Q_h\bu,\nabla_w\bv)+s(Q_h\bu,v)+(\bv_0,\widetilde{\nabla}_w\widetilde{Q}_hp)
\\
=&(\bf,\bv_0)+\sum_{T\in\mathcal{T}_h}\langle\bv_0-\bv_b,(\nabla\bu-\bQ_h\nabla\bu)\cdot\bn\rangle_{\partial T}
\\
&+s(Q_h\bu,\bv).
\end{aligned}
\end{equation}
Substituting it into (\ref{wf1}), then we would have
$$
a(\be_h,\bv)+b(\bv,\varepsilon_h)=s(Q_h\bu,\bv)+l_1(\bu,\bv).
$$
Combining the equations (\ref{eefb2}) with (\ref{eefc1}) we arrive at
\begin{equation}\label{eefl1}
\begin{aligned}
b(Q_h\bu,q)-c(\widetilde{Q}_hp,q)=&(\nabla q_0,\bu)-\sum_{T\in\mathcal{T}_h}\langle q_0-q_b,Q_0\bu\cdot\bn\rangle_{\partial T}-c(\widetilde{Q}_hp,q)
\\
=&\sum_{T\in\mathcal{T}_h}\langle\bu\cdot\bn,q_0-q_b\rangle_{\partial T}-\sum_{T\in\mathcal{T}_h}\langle q_0-q_b,Q_0\bu\cdot\bn\rangle_{\partial T}
\\
&-c(\widetilde{Q}_hp,q)
\\
=&\sum_{T\in\mathcal{T}_h}\langle
q_0-q_b,(\bu-Q_0\bu)\cdot\bn\rangle_{\partial T}
-c(\widetilde{Q}_hp,q).
\end{aligned}
\end{equation}
Substituting (\ref{eefl1}) into (\ref{wf2}) yields the following error equation
$$
b(\be_h,q)-c(\varepsilon_h,q)=l_2(\bu, q)-c(\widetilde{Q}_hp,q),
$$
for all $q\in W_h$, which completes the proof of (\ref{ee2}).
\end{proof}
\section{Error Estimates}
In this section we shall present the error estimates between the exact solution of (\ref{ose1})-(\ref{ose3}) and the numerical
solution of WG finite element method (\ref{wf1})-(\ref{wf2}). The two norms $\3bar\cdot\3bar$ and $\3bar\cdot\3bar_0$ are essentially $H^1$ norm and $L^2$ norm on $V_h$ and $W_h$ respectively. In this section we always assume $\mathcal{T}_h$ is shape regular {\rm(\cite{first})}.
\begin{theorem}
Let $(\bu,p)$ be the exact solution of (\ref{ose1})-(\ref{ose3}), $(\bu_h,p_h)$ be the numerical solution of (\ref{wf1})-(\ref{wf2}), then the following error estimates hold true
\begin{eqnarray}\label{ereq1}
\3bar\be_h\3bar+\3bar\varepsilon_h\3bar_*\le Ch^k(\|\bu\|_{k+1}+\|p\|_k),
\\
\|\varepsilon_0\|\le  Ch^k(\|\bu\|_{k+1}+\|p\|_k),
\end{eqnarray}
and consequently, one has
\begin{eqnarray}
\3bar \be_h\3bar+\3bar \varepsilon_h\3bar_0\le Ch^k(\|\bu\|_{k+1}+\|p\|_k).
\end{eqnarray}
\end{theorem}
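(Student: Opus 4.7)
The plan is to start from the error equations (\ref{ee1})--(\ref{ee2}) and proceed in three stages. First, I would take $\bv=\be_h$ in (\ref{ee1}) and $q=\varepsilon_h$ in (\ref{ee2}), then subtract the second from the first so that the mixed $b$-terms cancel, giving
\[
\3bar\be_h\3bar^2 + \3bar\varepsilon_h\3bar_*^2 \;=\; s(Q_h\bu,\be_h) + l_1(\bu,\be_h) - l_2(\bu,\varepsilon_h) + c(\widetilde{Q}_h p, \varepsilon_h).
\]
Each of the four terms on the right is a sum of boundary integrals whose factors involve either a projection error (for $Q_0$, $\bQ_h$, $\widetilde{Q}_0$, $\widetilde{Q}_b$) or the jumps measured by $\3bar\cdot\3bar$ and $\3bar\cdot\3bar_*$. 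Applying Cauchy--Schwarz together with the trace/approximation estimates announced for the appendix, each term is bounded by $Ch^k(\|\bu\|_{k+1}+\|p\|_k)\bigl(\3bar\be_h\3bar+\3bar\varepsilon_h\3bar_*\bigr)$. Dividing out gives the first inequality.

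Second, for the $L^2$ bound on $\varepsilon_0$ I would run a standard inf-sup argument lifted to the WG setting. Observe first that $\varepsilon_0 = \widetilde{Q}_0 p - p_0 \in L_0^2(\Omega)$, since both summands integrate to zero over $\Omega$ by the definition of $W_h$ and the zero-mean property of $p$. The continuous inf-sup condition then supplies $\tilde\bv\in[H_0^1(\Omega)]^d$ with $\nabla\cdot\tilde\bv = -\varepsilon_0$ and $\|\tilde\bv\|_1\le C\|\varepsilon_0\|$. I would test (\ref{ee1}) with $\bv = Q_h\tilde\bv\in V_h$. The key calculation is to unfold $b(Q_h\tilde\bv,\varepsilon_h)$ via (\ref{ifw2}), using that $\varepsilon_b$ is single-valued across interior edges and that the boundary trace $(Q_h\tilde\bv)_b$ vanishes on $\partial\Omega$; this reduces $b(Q_h\tilde\bv,\varepsilon_h)$ to $-(\varepsilon_0,\nabla\cdot Q_0\tilde\bv) + $ controllable edge corrections, which further replaces $-(\varepsilon_0,\nabla\cdot Q_0\tilde\bv)$ by $-(\varepsilon_0,\nabla\cdot\tilde\bv) = \|\varepsilon_0\|^2$ up to a remainder of order $h^k(\|\bu\|_{k+1}+\|p\|_k)\|\varepsilon_0\|$. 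Bounding $a(\be_h, Q_h\tilde\bv)$ by $\3bar\be_h\3bar\,\3bar Q_h\tilde\bv\3bar$ together with the stability $\3bar Q_h\tilde\bv\3bar \le C\|\tilde\bv\|_1 \le C\|\varepsilon_0\|$, and absorbing the right-hand-side terms of (\ref{ee1}) by the first-stage estimate, I can divide out $\|\varepsilon_0\|$ to conclude the second inequality.

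Third, the final combined bound follows from the second identity in (\ref{wn3}), namely $\3bar\varepsilon_h\3bar_0^2 = \|\varepsilon_0\|^2 + \3bar\varepsilon_h\3bar_*^2$, together with the first two estimates.

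The main obstacle will be the inf-sup stage: tracking the discrepancy between $\nabla\cdot Q_0\tilde\bv$ and $\nabla\cdot\tilde\bv$, and verifying that the edge-jump contribution $\sum_T\langle\varepsilon_b,\,Q_0\tilde\bv\cdot\bn\rangle_{\partial T}$ arising from (\ref{ifw2}) combines with the volume term to give $\|\varepsilon_0\|^2$ plus an $O(h^k)$ remainder. This is the point at which the pairing of $P_k$--$P_{k-1}$ spaces, the stabilization $c(\cdot,\cdot)$, and the commutativity (\ref{qcfp}) must cooperate, and where the appendix projection estimates enter most critically.
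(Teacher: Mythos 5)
Your proposal is correct and follows essentially the same route as the paper: the same energy argument with $\bv=\be_h$, $q=\varepsilon_h$ and the appendix estimates for the first bound, the same inf-sup duality argument with the test function $Q_h\tilde\bv$ (the paper states the inf-sup as a ratio bound rather than via an explicit right inverse of the divergence, but the computation of $b(Q_h\tilde\bv,\varepsilon_h)$, the stability $\3bar Q_h\tilde\bv\3bar\le C\|\tilde\bv\|_1$, and the absorption of the edge correction by $\3bar\varepsilon_h\3bar_*$ are exactly the paper's steps), and the same conclusion from the definition of $\3bar\cdot\3bar_0$.
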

\begin{proof}
Letting $\bv=\be_h$ in (\ref{ee1}) and $q=\varepsilon_h$ in (\ref{ee2}), we would obtain
\begin{equation*}
\begin{aligned}
\3bar\be_h\3bar^2+\3bar\varepsilon_h\3bar_*^2=&s(Q_h\bu,\be_h)+l_1(\bu,\be_h)
\\
&-l_2(\bu, \varepsilon_h)+c(\widetilde{Q}_hp,\varepsilon_h).
\end{aligned}
\end{equation*}
Then from (\ref{iqfs})-(\ref{iqfc}) we arrive at
$$
\3bar\be_h\3bar^2+\3bar\varepsilon_h\3bar_*^2\le Ch^k(\|\bu\|_{k+1}+\|p\|_k)(\3bar\be_h\3bar+\3bar\varepsilon_h\3bar_*),
$$
from which we would have
$$
\3bar\be_h\3bar+\3bar\varepsilon_h\3bar_*\le Ch^k(\|\bu\|_{k+1}+\|p\|_k).
$$

For any given $\rho\in W_h\subset L^2_0(\Omega)$, it follows from
{\rm\cite{WangYeStokes,rfb1,BF91,rfb3,GR86,Gunzburger89}} that there
is a $\widetilde{\bv}\in[H^1_0(\Omega)]^d$ such that
\begin{eqnarray}\label{btiqfb}
\frac{(\nabla\cdot\widetilde{\bv},\rho)}{\|\widetilde{\bv}\|_1}\ge C\|\rho\|,
\end{eqnarray}
where C is a positive constant which is dependent only on $\Omega$. Let $\bv=Q_h\widetilde{\bv}\in V_h$, we claim that the following inequality holds true
\begin{eqnarray}\label{ifv3}
\3bar\bv\3bar\le C\|\widetilde{\bv}\|_1,
\end{eqnarray}
where C is a constant.

From (\ref{qcfv}), we have
\begin{eqnarray*}
\sum_{T\in\mathcal{T}_h}\|\nabla_w\bv\|^2_T=\sum_{T\in\mathcal{T}_h}\|\nabla_w(Q_h\widetilde{\bv})\|^2_T=\sum_{T\in\mathcal{T}_h}\|\bQ_h(\nabla\widetilde{\bv})\|^2_T
\le\|\nabla\widetilde{\bv}\|^2.
\end{eqnarray*}
It follows from the definition of $Q_b$, (\ref{czi1}), and
(\ref{ti}) that
\begin{equation*}
\begin{aligned}
\sum_{T\in\mathcal{T}_h}h_T^{-1}\|\bv_0-\bv_b\|^2_{\partial T}&=\sum_{T\in\mathcal{T}_h}h_T^{-1}\|Q_0\widetilde{\bv}-Q_b \widetilde{\bv}\|^2_{\partial T}
\\
&=\sum_{T\in\mathcal{T}_h}h_T^{-1}\|Q_b(Q_0\widetilde{\bv})-Q_b\widetilde{\bv}\|^2_{\partial T}
\\
&\le\sum_{T\in\mathcal{T}_h}h_T^{-1}\|Q_0\widetilde{\bv}-\widetilde{\bv}\|^2_{\partial T}
\\
&\le C\|\nabla\widetilde{\bv}\|^2,
\end{aligned}
\end{equation*}
which yields
$$
\3bar\bv\3bar\le C\|\widetilde{\bv}\|_1.
$$

From (\ref{eefb2}), (\ref{btiqfb}), (\ref{1qfl3}), and the fact that
$\sum_{T\in\mathcal{T}_h}\langle
p_b,\widetilde{\bv}\cdot\bn\rangle_{\partial T}=0$ on
$\partial\Omega$, we would obtain
\begin{equation}\label{ifb}
\begin{aligned}
|b(\bv,p)|&=|b(Q_0\widetilde{\bv},p)|
\\
&=\left|(\nabla p_0,\widetilde{\bv})-\sum_{T\in\mathcal{T}_h}\langle p_0-p_b,Q_0\widetilde{\bv}\cdot\bn\rangle_{\partial T}\right|
\\
&=\left|-(p_0,\nabla\cdot\widetilde{\bv})-\sum_{T\in\mathcal{T}_h}\langle p_0-p_b,(Q_0\widetilde{\bv}-\widetilde{\bv})\cdot\bn\rangle_{\partial T}\right|
\\
&\ge
\left|(p_0,\nabla\cdot\widetilde{\bv})\right|-\left|\sum_{T\in\mathcal{T}_h}\langle
p_0-p_b,(Q_0\widetilde{\bv}-\widetilde{\bv})\cdot\bn\rangle_{\partial
T}\right|
\\
&\ge \|\widetilde{\bv}\|_1(C_1\|p_0\|-C_2\3bar p\3bar_*).
\end{aligned}
\end{equation}
Using (\ref{ee1}), (\ref{ereq1}), (\ref{iqfs}) and (\ref{iqfl1}), we obtain
\begin{equation}
\begin{aligned}\label{ifb1}
|b(\bv,\varepsilon_h)|&=|s(Q_h\bu,\bv)+l_1(\bu,\bv)-a(\be_h,\bv)|
\\
&\le Ch^k\|\bu\|_{k+1}\3bar\bv\3bar+\3bar\be_h\3bar\3bar\bv\3bar
\\
&\le Ch^k(\|\bu\|_{k+1}+\|p\|_k)\3bar\bv\3bar.
\end{aligned}
\end{equation}
Let $\bv$ be such that (\ref{ifb}) is true, it follows from (\ref{ifb1}) that
\begin{equation*}
\begin{aligned}
Ch^k(\|\bu\|_{k+1}+\|p\|_k)\geq\frac{|b(\bv,\varepsilon_h)|}{\3bar\bv\3bar}\geq C\frac{|b(\bv,\varepsilon_h)|}{\|\widetilde{\bv}\|_1}\geq C_1\|\varepsilon_0\|-C_2\3bar \varepsilon_h\3bar_*.
\end{aligned}
\end{equation*}
Then (\ref{ereq1}) implies that
$$
\|\varepsilon_0\|\le  Ch^k(\|\bu\|_{k+1}+\|p\|_k).
$$
From what we have demonstrated, one has
\begin{eqnarray*}
\3bar \be_h\3bar+\3bar \varepsilon_h\3bar_0\le Ch^k(\|\bu\|_{k+1}+\|p\|_k).
\end{eqnarray*}
\end{proof}

We shall use the dual technique to derive the $L^2$ error. Assume this problem has the $[H^2(\Omega)]^d\times H^1(\Omega)$-regularity, then the solution $(\psi;\xi)\in [H^2(\Omega)]^d\times H^1(\Omega)$ of the following equations
\begin{eqnarray}
\label{eefl3}-\Delta\psi+\nabla\xi&=&\be_0, \quad {\rm in}\ \Omega,
\\
\label{eefl4}\nabla\cdot\psi&=&0 ,~\quad {\rm in}\ \Omega,
\\
\label{eefl5}\psi&=&0 ,~\quad {\rm on}\ \partial\Omega
\end{eqnarray}
satisfies the following property
\begin{eqnarray}\label{eefl6}
\|\psi\|_2 + \|\xi\|_1\le C\|\be_0\|.
\end{eqnarray}
\begin{theorem}
Let $(\bu;p)\in[H^1(\Omega)\cap H^{k+1}(\Omega)]^d\times(L^2_0\cap H^k(\Omega))$
be the exact solution of (\ref{ose1})-(\ref{ose3}), $(\bu_h;p_h)\in V_h\times W_h$
be the numerical solution of (\ref{wf1})-(\ref{wf2}), $\be_0=Q_0\bu-\bu_0$ in
$\be=\{\be_0,\be_b\}$ then the following error estimate holds true
\begin{eqnarray}\label{eefl7}
\|\be_0\|\le Ch^{k+1}(\|\bu\|_{k+1}+\|p\|_k).
\end{eqnarray}
\end{theorem}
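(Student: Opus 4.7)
The plan is to run the standard Aubin--Nitsche duality argument based on the $[H^2(\Omega)]^d\times H^1(\Omega)$-regular dual problem (\ref{eefl3})--(\ref{eefl5}). Testing the first dual equation against $\be_0$ gives
\begin{equation*}
\|\be_0\|^2 = (\be_0,-\Delta\psi) + (\be_0,\nabla\xi),
\end{equation*}
and the goal is to rewrite the right-hand side in a form in which the error equations (\ref{ee1})--(\ref{ee2}), applied with the test pair $(\bv,q)=(Q_h\psi,\widetilde{Q}_h\xi)$, can be plugged in, leaving only boundary-type remainders that pick up the extra factor of $h$ afforded by the regularity of $(\psi,\xi)$.

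For the Laplacian piece I would integrate by parts element by element; using that $\be_b$ is single-valued on interior faces and vanishes on $\partial\Omega$ while $\nabla\psi$ is continuous, this produces $(\be_0,-\Delta\psi)=(\nabla\be_0,\nabla\psi)-\sum_T\langle\be_0-\be_b,\nabla\psi\cdot\bn\rangle_{\partial T}$. Since $\nabla\be_0\in[P_{k-1}(T)]^{d\times d}$, $\nabla\psi$ can be replaced by $\bQ_h\nabla\psi$ in the bulk term; (\ref{ifw1}) with $\tau=\bQ_h\nabla\psi$ together with the commutativity (\ref{qcfv}) then convert the bulk part into $(\nabla_w\be_h,\nabla_w Q_h\psi)$, and the surface pieces combine into $-l_1(\psi,\be_h)$. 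Adding and subtracting $s(\be_h,Q_h\psi)$ rewrites this as $a(\be_h,Q_h\psi)-s(\be_h,Q_h\psi)-l_1(\psi,\be_h)$. For the pressure-gradient piece, (\ref{eefb1}) together with $\be_0\in[P_k(T)]^d$ yields directly $(\be_0,\nabla\xi)=b(\be_h,\widetilde{Q}_h\xi)$. Finally, using (\ref{eefb2}), elementwise integration by parts on $(\psi,\nabla\varepsilon_0)$, the constraint $\nabla\cdot\psi=\b0$, and $\psi|_{\partial\Omega}=\b0$, I would derive the auxiliary identity $b(Q_h\psi,\varepsilon_h)=l_2(\psi,\varepsilon_h)$.

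Substituting $\bv=Q_h\psi$ in (\ref{ee1}) and $q=\widetilde{Q}_h\xi$ in (\ref{ee2}) and combining with the three identities just obtained collapses everything into the master representation
\begin{equation*}
\|\be_0\|^2 = s(Q_h\bu,Q_h\psi) + l_1(\bu,Q_h\psi) + l_2(\bu,\widetilde{Q}_h\xi) - c(\widetilde{Q}_hp,\widetilde{Q}_h\xi) + c(\varepsilon_h,\widetilde{Q}_h\xi) - l_2(\psi,\varepsilon_h) - s(\be_h,Q_h\psi) - l_1(\psi,\be_h).
\end{equation*}
Each of the eight terms is then to be bounded by $Ch^{k+1}(\|\bu\|_{k+1}+\|p\|_k)\|\be_0\|$ via Cauchy--Schwarz together with (i) the appendix-style trace and projection estimates (such as (\ref{iqfs})--(\ref{iqfc}) and (\ref{iqfl1})), applied at the sharp $H^{k+1}$ and $H^k$ levels for $\bu$ and $p$ and at the $H^2$ and $H^1$ levels for $\psi$ and $\xi$; (ii) the previously proved energy estimate $\3bar\be_h\3bar+\3bar\varepsilon_h\3bar_*\le Ch^k(\|\bu\|_{k+1}+\|p\|_k)$; and (iii) the dual-regularity bound (\ref{eefl6}), $\|\psi\|_2+\|\xi\|_1\le C\|\be_0\|$. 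The extra factor of $h$ over the previous estimate always arises from applying a first-order projection bound to the dual variables. Dividing by $\|\be_0\|$ then delivers (\ref{eefl7}).

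The main obstacle is the bookkeeping in the second paragraph: correctly identifying each boundary integral produced by the elementwise integration by parts as precisely the right $l_1$, $l_2$, or $s$ form, so that after summation the identity lines up term-by-term with the error equations and with the auxiliary identity $b(Q_h\psi,\varepsilon_h)=l_2(\psi,\varepsilon_h)$. Once the master identity is in place, bounding the eight residual terms is a routine application of the projection estimates catalogued in the appendix.
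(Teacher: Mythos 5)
Your proposal is correct and follows essentially the same route as the paper: the same duality argument, the same auxiliary identity $b(Q_h\psi,\varepsilon_h)=l_2(\psi,\varepsilon_h)$, the identical eight-term master representation, and the same bounding strategy (projection/trace estimates on $\bu,p$ at order $k$ and on $\psi,\xi$ at order one, combined with the energy estimate and the regularity bound $\|\psi\|_2+\|\xi\|_1\le C\|\be_0\|$). The only cosmetic difference is that you re-derive the dual identity by elementwise integration by parts, whereas the paper simply reuses its previously established identity (\ref{eefl2}) with $(\bu,p,\bf,\bv)$ replaced by $(\psi,\xi,\be_0,\be_h)$.
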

\begin{proof}
Since $(\psi;\xi)$ is the solution of (\ref{eefl3})-(\ref{eefl5}), letting $\bu=\psi, \bv=\be_h, p=\xi$ and $f=\be_0$ in (\ref{eefl2}) gives
\begin{eqnarray}\label{eefl8}
\|\be_0\|^2=a(Q_h\psi,\be_h)+b(\be_h,\widetilde{Q}_h\xi)-l_1(\psi,\be_h)-s(Q_h\psi,\be_h).
\end{eqnarray}
Letting $q=\widetilde{Q}_h\xi$ in (\ref{ee2}), we obtain
\begin{eqnarray}\label{eefl9}
b(\be_h,\widetilde{Q}_h\xi)=c(\varepsilon_h,\widetilde{Q}_h\xi)+l_2(\bu,\widetilde{Q}_h\xi)-c(\widetilde{Q}_h p,\widetilde{Q}_h\xi).
\end{eqnarray}
From (\ref{eefb2}) and (\ref{eefl4}), we arrive at
\begin{equation}\label{eefl10}
\begin{aligned}
b(Q_h\psi,\varepsilon_h)=& (\nabla\varepsilon_0,\psi)-\sum_{T\in\mathcal{T}_h}\langle\varepsilon_0-\varepsilon_b,Q_0\psi\cdot\bn\rangle_{\partial T}
\\
=&-(\varepsilon_0,\nabla\cdot\psi)+\sum_{\T\in\mathcal{T}_h}\langle\varepsilon_0,\psi\cdot\bn\rangle_{\partial T}-\sum_{T\in\mathcal{T}_h}\langle\varepsilon_0-\varepsilon_b,Q_0\psi\cdot\bn\rangle_{\partial T}
\\
=&-(\varepsilon_0,\nabla\cdot\psi)+\sum_{T\in\mathcal{T}_h}\langle\varepsilon_0-\varepsilon_b,(\psi-Q_0\psi)\cdot\bn\rangle_{\partial T}
\\
=&~l_2(\psi,\varepsilon_h),
\end{aligned}
\end{equation}
where we have used the fact that $\sum_{\T\in\mathcal{T}_h}\langle\varepsilon_b,\psi\cdot\bn\rangle_{\partial T}=0.$

Taking $\bv=Q_h\psi$ in (\ref{ee1}), combined with (\ref{eefl8})-(\ref{eefl10}) one has
\begin{equation}
\begin{aligned}
\label{bt1}\|\be_0\|^2=&s(Q_h\bu,Q_h\psi)+l_1(\bu,Q_h\psi)
\\
&-l_2(\psi, \varepsilon_h)+c(\widetilde{Q}_h\xi,\varepsilon_h)+l_2(\bu, \widetilde{Q}_h\xi)
\\
&-c(\widetilde{Q}_hp,\widetilde{Q}_h\xi)-l_1(\psi,\be_h)-s(Q_h\psi,\be_h).
\end{aligned}
\end{equation}
It follows from (\ref{iqfs})-(\ref{iqfl1}) that
\begin{equation}\label{bt2}
\begin{aligned}
|l_1(\psi,\be_h)+s(Q_h\psi,\be_h)|\le Ch(\|\psi\|_2+\|\xi\|_1)\3bar\be_h\3bar.
\end{aligned}
\end{equation}
Using (\ref{1qfl3})-(\ref{iqfc}), we would obtain
\begin{equation}\label{bt3}
\begin{aligned}
|l_2(\psi,\varepsilon_h)-c(\widetilde{Q}_h\xi,\varepsilon_h)|\le Ch(\|\psi\|_2+\|\xi\|_1)\3bar\varepsilon_h\3bar_*.
\end{aligned}
\end{equation}
From the definition of $Q_b$, (\ref{czi1}), (\ref{czi2}), and
(\ref{ti}), we arrive at
\begin{equation}\label{bt4}
\begin{aligned}
|l_1(\bu,Q_h\psi)|&=\left|\sum_{T\in\mathcal{T}_h}\langle Q_0\psi-Q_b\psi,(\nabla\bu-Q_h\nabla\bu)\cdot\bn\rangle_{\partial T}\right|
\\
&=\left|\sum_{T\in\mathcal{T}_h}\langle Q_b (Q_0\psi)-Q_b\psi,(\nabla\bu-Q_h\nabla\bu)\cdot\bn\rangle_{\partial T}\right|
\\
&=\left|\sum_{T\in\mathcal{T}_h}\langle Q_b(Q_0\psi-\psi),(\nabla\bu-Q_h\nabla\bu)\cdot\bn\rangle_{\partial T}\right|
\\
&\le\|Q_0\psi-\psi\|_{\partial T}\|\nabla\bu-\bQ_h\nabla\bu\|_{\partial T}
\\
&\le Ch^{k+1}\|\psi\|_2\|\bu\|_{k+1}.
\end{aligned}
\end{equation}
It follows from the definition of $Q_b$, (\ref{ti}), and (\ref{czi1}) that
\begin{equation}\label{bt5}
\begin{aligned}
|s(Q_h\bu,Q_h\psi)|=&\left|\sum_{T\in\mathcal{T}_h}h_T^{-1}\langle Q_0\bu-Q_b\bu,Q_0\psi-Q_b\psi\rangle_{\partial T}\right|
\\
=&\left|\sum_{T\in\mathcal{T}_h}h_T^{-1}\langle Q_b(Q_0\bu-\bu),Q_b(Q_0\psi-\psi)\rangle_{\partial T}\right|
\\
\le&\left(\sum_{T\in\mathcal{T}_h}h_T^{-1}\|Q_0\bu-\bu\|^2_{\partial T}\right)^{\frac12}\left(\sum_{T\in\mathcal{T}_h}h_T^{-1}\|Q_0\psi-\psi\|_{\partial T}\right)^{\frac12}
\\
\le&Ch^{k+1}\|\bu\|_{k+1}\|\psi\|_2.
\end{aligned}
\end{equation}
The definition of $Q_b$ together with (\ref{czi1}), (\ref{czi3}), and (\ref{ti}) yields
\begin{equation}\label{bt7}
\begin{aligned}
|l_2(\bu,\widetilde{Q}_h\xi)|=&\left|\sum_{T\in\mathcal{T}_h}\langle\widetilde{Q}_0\xi-\widetilde{Q}_b\xi,(\bu-Q_0\bu)\cdot\bn\rangle_{\partial T}\right|
\\
=&\left|\sum_{T\in\mathcal{T}_h}\langle\widetilde{Q}_b(\widetilde{Q}_0\xi)-\widetilde{Q}_b\xi,(\bu-Q_0\bu)\cdot\bn\rangle_{\partial T}\right|
\\
=&\left|\sum_{T\in\mathcal{T}_h}\langle\widetilde{Q}_b(\widetilde{Q}_0\xi-\xi),(\bu-Q_0\bu)\cdot\bn\rangle_{\partial T}\right|
\\
\le&\left(\sum_{T\in\mathcal{T}_h}\|\widetilde{Q}_0\xi-\xi\|^2_{\partial T}\right)^{\frac12} \left(\sum_{T\in\mathcal{T}_h}\|\bu-Q_0\bu\|^2_{\partial T}\right)^{\frac12}
\\
\le&Ch^{k+1}\|\xi\|_1\|\bu\|_{k+1}.
\end{aligned}
\end{equation}
From the definition of $Q_b$, (\ref{czi3}), and (\ref{ti}), we obtain
\begin{equation}\label{bt8}
\begin{aligned}
|c(\widetilde{Q}_hp,\widetilde{Q}_h\xi)|=&\left|\sum_{T\in\mathcal{T}_h}h_T\langle\widetilde{Q}_0p-\widetilde{Q}_bp,\widetilde{Q}_0\xi-\widetilde{Q}_b\xi\rangle_{\partial T}\right|
\\
=&\left|\sum_{T\in\mathcal{T}_h}h_T\langle\widetilde{Q}_b(\widetilde{Q}_0p-p),\widetilde{Q}_b(\widetilde{Q}_0\xi-\xi)\rangle_{\partial T}\right|
\\
\le&\left(\sum_{T\in\mathcal{T}_h}h_T\|\widetilde{Q}_0p-p\|^2_{\partial T}\right)^{\frac12} \left(\sum_{T\in\mathcal{T}_h}h_T\|\widetilde{Q}_0\xi-\xi\|^2_{\partial T}\right)^{\frac12}
\\
\le& Ch^{k+1}\|p\|_k\|\xi\|_1.
\end{aligned}
\end{equation}
From (\ref{bt1})-(\ref{bt8}), one has
$$
\|\be_0\|^2\le Ch^{k+1}(\|\psi\|_2+\|\xi\|_1)(\|\bu\|_{k+1}+\|p\|_k)+C h(\|\psi\|_2+\|\xi\|_1)(\3bar\be_h\3bar+\3bar\varepsilon_h\3bar_*),
$$
it follows from (\ref{eefl6}) that
$$
\|\be_0\|\le Ch^{k+1}(\|\bu\|_{k+1}+\|p\|_k)+Ch(\3bar\be_h\3bar+\3bar\varepsilon_h\3bar_*),
$$
together with (\ref{ereq1}), we would have
$$
\|\be_0\|\le C h^{k+1}(\|\bu\|_{k+1}+\|p\|_k),
$$
which completes the proof of the theorem.
\end{proof}

\section{Numerical Experiments}\label{Section:NumericalResults}

The goal of this section is to report some numerical results for the
weak Galerkin finite element method proposed and analyzed in
previous sections.

Let $(\bu;p)$ be the exact solution of (\ref{ose1})-(\ref{ose3}) and
$(\bu_h;p_h)$ be the numerical solution of (\ref{wf1})-(\ref{wf2}).
Denote $\be_h=Q_h\bu-\bu_h$ and
$\varepsilon_h=\widetilde{Q}_hp-p_h.$ The error for the weak
Galerkin solution is measured in four norms defined as follows:
\begin{eqnarray*}
\displaystyle|\!|\!| \be_h |\!|\!|^2&=&\sum_{T\in\mathcal{T}_h}
\left(\int_T|\nabla_w \be_h|^2 dT+ h_T^{-1}\int_{\partial
T}(\be_0-\be_b)^2 ds\right), \nonumber
\\[1mm]
\displaystyle\|\be_h\|^2&=&\sum_{T\in\mathcal{T}_h} \int_T|\be_h|^2
dT,
\\[1mm]
\displaystyle|\!|\!| \varepsilon_h |\!|\!|^2_0&=&
\sum_{T\in\mathcal{T}_h} \left(\int_T|\varepsilon_0|^2 dT +
  h_T\int_{\partial T}|\varepsilon_0- \varepsilon_b|^2
ds\right),
\\[1mm]
\displaystyle\|\varepsilon_h\|^2&=&\sum_{T\in\mathcal{T}_h}
\int_T|\varepsilon_h|^2 dT.
\end{eqnarray*}

\textbf{Example 7.1} {\rm Consider the problem
(\ref{ose1})-(\ref{ose3}) in the square domain $\Omega=(0,1)^2$. The
WG finite element space $k = 1$ is employed in the numerical
discretization. It has the analytic solution
\begin{eqnarray*}
\bu=\left(\begin{array}{c} \sin(\pi x) \sin(\pi y)
\\
\cos(\pi x) \cos(\pi y)
\end{array}\right) \ {\rm and\ } p=2 \cos(\pi x) \sin(\pi y).
\end{eqnarray*}
The right hand side function $\bf$ in (\ref{ose1}) is computed to
match the exact solution. The mesh size is denoted by $h$.

Table 7.1 shows that the errors and convergence rates of Example 7.1
in $|\!|\!| \cdot |\!|\!|-$ norm and $L^2-$norm for the WG-FEM
solution $\bu$ are of order $O(h)$ and $O(h^2)$ when $k=1$,
respectively.

Table 7.2 shows that the errors and orders of Example 7.1 in
$|\!|\!| \cdot |\!|\!|_0-$norm and $L^2-$norm for pressure when
$k=1$. The numerical results are also consistent with theory for
these two cases.

{\color{black}Table 7.3 and 7.4 show the errors and orders of Example
7.1 for the case $k=2$, and the convergence rates coincide with the
theoretical expectation.}

}

\begin{center}
Table 7.1. Numerical errors and orders for $\bu$ of Example 7.1 with $k=1$. \\
\begin{center}
    \begin{tabular}{ | c || c | c | c | c |}
    \hline
     $h$  & \  $|\!|\!| \be_h|\!|\!|$\  & \quad order \quad \,  &  \  $\|\be_h\|$  \   & \ \  order \ \  \\ \hline \hline
     1/4  & \  1.2347e+00  \   &          & \ 1.0681e-01 \ &           \\ \hline
     1/8  &  7.5411e-01    & 0.7113   & 2.8345e-02   & 1.9139     \\ \hline
    1/16  &  4.0953e-01    & 0.8808   & 7.8149e-03   & 1.8588     \\ \hline
    1/32  &  2.0483e-01    & 0.9995   & 2.0169e-03   & 1.9541     \\ \hline
    1/64  &  1.0172e-01    & 1.0099   & 5.0860e-04   & 1.9876     \\ \hline
    1/128 &  5.0471e-02    & 1.0110   & 1.2745e-04   & 1.9966     \\ \hline
    \hline
    \end{tabular}
\end{center}
\end{center}
\begin{center}
Table 7.2. Numerical errors and orders for $p$ of Example 7.1 with $k=1$. \\
\begin{center}
    \begin{tabular}{ | c || c | c | c | c |}
    \hline
     $h$  & \  $|\!|\!| \varepsilon_h|\!|\!|_0$\  & \quad order \quad \,  &  \  $\|\varepsilon_h\|$  \   & \ \  order \ \  \\ \hline \hline
     1/4  & \  1.1642e+00  \   &          & \ 7.6948e-01 \ &           \\ \hline
     1/8  &    5.1214e-01  & 1.1847   & 3.4266e-01   & 1.1671     \\ \hline
    1/16  &    2.1109e-01  & 1.2786   & 1.1063e-01   & 1.6311     \\ \hline
    1/32  &    9.3992e-02  & 1.1673   & 3.1403e-02   & 1.8168     \\ \hline
    1/64  &    4.4978e-02  & 1.0633   & 8.8252e-03   & 1.8312     \\ \hline
    1/128 &    2.2179e-02  & 1.0200   & 2.6757e-03   & 1.7217     \\ \hline
    \hline
    \end{tabular}
\end{center}
\end{center}

\begin{center}
Table 7.3. Numerical errors and orders for $\bu$ of Example 7.1 with $k=2$. \\
\begin{center}
    \begin{tabular}{ | c || c | c | c | c |}
    \hline
     $h$  & \  $|\!|\!| \be_h|\!|\!|$\  & \quad order \quad \,  &  \  $\|\be_h\|$  \   & \ \  order \ \  \\ \hline \hline
   1/4   & 2.3130e-01 &         & 1.8281e-02 &         \\ \hline
   1/8   & 5.9891e-02 & 1.9494  & 2.3513e-03 & 2.9588  \\ \hline
   1/16  & 1.4832e-02 & 2.0136  & 2.9732e-04 & 2.9834  \\ \hline
   1/32  & 3.7134e-03 & 1.9979  & 3.7349e-05 & 2.9929  \\ \hline
   1/64  & 9.2987e-04 & 1.9977  & 4.6792e-06 & 2.9967  \\ \hline
   1/128 & 2.3265e-04 & 1.9989  & 5.8553e-07 & 2.9984  \\ \hline
    \hline
    \end{tabular}
\end{center}
\end{center}

\begin{center}
Table 7.4. Numerical errors and orders for $p$ of Example 7.1 with $k=2$. \\
\begin{center}
    \begin{tabular}{ | c || c | c | c | c |}
    \hline
     $h$  & \  $|\!|\!| \varepsilon_h|\!|\!|_0$\  & \quad order \quad \,  &  \  $\|\varepsilon_h\|$  \   & \ \  order \ \  \\ \hline \hline
   1/4   & 1.9753e-01 &         & 4.3517e-02 &         \\ \hline
   1/8   & 5.0278e-02 & 1.9741  & 8.5422e-03 & 2.3489  \\ \hline
   1/16  & 1.2600e-02 & 1.9965  & 1.8980e-03 & 2.1702  \\ \hline
   1/32  & 3.1485e-03 & 2.0007  & 4.5063e-04 & 2.0744  \\ \hline
   1/64  & 7.8661e-04 & 2.0010  & 1.1009e-04 & 2.0333  \\ \hline
   1/128 & 1.9657e-04 & 2.0006  & 2.7227e-05 & 2.0155  \\ \hline
\hline
    \end{tabular}
\end{center}
\end{center}

\textbf{Example 7.2} {\rm Consider the problem
(\ref{ose1})-(\ref{ose3}) in the square domain $\Omega=(0,1)^2$. The
WG finite element space $k = 1$ is employed in the numerical
discretization. It has the analytic solution
\begin{eqnarray*}
\bu=\left(\begin{array}{c} 2\pi\sin^2(\pi x)\cos(\pi y)
\sin(\pi y)
\\
-2\pi\sin(\pi x) \cos(\pi x) \sin^2(\pi y)
\end{array}\right)
\end{eqnarray*}
and
$$
p=\cos(\pi x) \cos(\pi y).
$$
The right hand side function $\bf$ in (\ref{ose1}) is computed to
match the exact solution. The mesh size is denoted by $h$.

The numerical results are presented in Tables 7.5-7.8, which confirm
the theory developed in previous sections. }

\begin{center}
Table 7.5. Numerical errors and orders for $\bu$ of Example 7.2 with $k=1$. \\
\begin{center}
    \begin{tabular}{ | c || c | c | c | c |}
    \hline
     $h$  & \  $|\!|\!| \be_h|\!|\!|$\  & \quad order \quad \,  &  \  $\|\be_h\|$  \   & \ \  order \ \  \\ \hline \hline
     1/4  & \  1.3024e+01  \   &          & \ 1.9402e+00 \ &           \\ \hline
     1/8  &  6.2924e+00    & 1.0494   & 3.1369e-01   & 2.6288     \\ \hline
    1/16  &  3.1404e+00    & 1.0027   & 5.7291e-02   & 2.4530     \\ \hline
    1/32  &  1.5840e+00    & 0.9874   & 1.2695e-02   & 2.1740     \\ \hline
    1/64  &  7.9695e-01    & 0.9910   & 3.0804e-03   & 2.0431     \\ \hline
    1/128 &  3.9961e-01    & 0.9959   & 7.6596e-04   & 2.0078     \\ \hline
    \hline
    \end{tabular}
\end{center}
\end{center}

\begin{center}
Table 7.6. Numerical errors and orders for $p$ of Example 7.2 with $k=1$. \\
\begin{center}
    \begin{tabular}{ | c || c | c | c | c |}
    \hline
     $h$  & \  $|\!|\!| \varepsilon_h|\!|\!|_0$\  & \quad order \quad \,  &  \  $\|\varepsilon_h\|$  \   & \ \  order \ \  \\ \hline \hline
     1/4  & \  2.5875e+00  \   &          & \ 6.7394e-01 \ &           \\ \hline
     1/8  &    1.1518e+00  & 1.1676   & 5.3190e-01   & 0.3415     \\ \hline
    1/16  &    5.1671e-01  & 1.1565   & 2.8686e-01   & 0.8908     \\ \hline
    1/32  &    2.2432e-01  & 1.2038   & 1.1558e-01   & 1.3114     \\ \hline
    1/64  &    9.9999e-02  & 1.1656   & 4.1483e-02   & 1.4783     \\ \hline
    1/128 &    4.6840e-02  & 1.0942   & 1.5196e-02   & 1.4489     \\ \hline
    \hline
    \end{tabular}
\end{center}
\end{center}

\begin{center}
Table 7.7. Numerical errors and orders for $\bu$ of Example 7.2 with $k=2$. \\
\begin{center}
    \begin{tabular}{ | c || c | c | c | c |}
    \hline
     $h$  & \  $|\!|\!| \be_h|\!|\!|$\  & \quad order \quad \,  &  \  $\|\be_h\|$  \   & \ \  order \ \  \\ \hline \hline
   1/4   & 3.1376e+00 &         & 2.4616e-01 &         \\ \hline
   1/8   & 8.2291e-01 & 1.9309  & 3.2044e-02 & 2.9415  \\ \hline
   1/16  & 2.0280e-01 & 2.0207  & 4.0772e-03 & 2.9744  \\ \hline
   1/32  & 5.0679e-02 & 2.0006  & 5.1315e-04 & 2.9901  \\ \hline
   1/64  & 1.2698e-02 & 1.9968  & 6.4328e-05 & 2.9959  \\ \hline
   1/128 & 3.1774e-03 & 1.9986  & 8.0513e-06 & 2.9981  \\ \hline
\hline
    \end{tabular}
\end{center}
\end{center}

\begin{center}
Table 7.8. Numerical errors and orders for $p$ of Example 7.2 with $k=2$. \\
\begin{center}
    \begin{tabular}{ | c || c | c | c | c |}
    \hline
     $h$  & \  $|\!|\!| \varepsilon_h|\!|\!|_0$\  & \quad order \quad \,  &  \  $\|\varepsilon_h\|$  \   & \ \  order \ \  \\ \hline \hline
   1/4   & 1.3811e+00 &         & 4.8957e-01 &         \\ \hline
   1/8   & 3.5346e-01 & 1.9662  & 8.5399e-02 & 2.5192  \\ \hline
   1/16  & 8.7131e-02 & 2.0203  & 1.5911e-02 & 2.4242  \\ \hline
   1/32  & 2.1567e-02 & 2.0144  & 3.4240e-03 & 2.2163  \\ \hline
   1/64  & 5.3636e-03 & 2.0075  & 7.9926e-04 & 2.0990  \\ \hline
   1/128 & 1.3374e-03 & 2.0038  & 1.9366e-04 & 2.0452  \\ \hline
\hline
    \end{tabular}
\end{center}
\end{center}

\textbf{Example 7.3} Consider the following lid-driven
cavity problem. in the square domain $\Omega=(0,1)^2$.
{\color{black}
This is a benchmark testcase for Stokes flow,
which has been tested in \cite{ESW05,Liu11,WangWangYe09,YangLiuLin15}.
A delicate analysis of solution regularity is presented in
\cite{CaiWang09}.
}

In this
example, a uniform mesh with step $h=1/32$ and polynomial degree
$k=2$ are employed. The source term in (\ref{ose1}) is
$\textbf{f}=0$ and the Dirichlet boundary condition is given as
\begin{eqnarray*}
\textbf{u}= \left\{
\begin{array}{l} (1,0)^T,\quad \text{if }x=1, y\in(0,1),
\\
0,\quad \text{otherwise}.
\end{array}
\right.
\end{eqnarray*}
The exact solution of lid-driven cavity problem is unknown, which
has singularity at point $(1,0)$ and $(1,1)$.

The vectograph and streamlines of the velocity field
are presented in Fig 7.1 and Fig7.2. The shape of streamlines is
similar to the result obtained by IFISS{\color{black}\cite{ERS07}}.

%

\begin{figure}
\begin{center}
   \begin{tabular}{ccc}
     \resizebox{5cm}{4.5cm}{\includegraphics{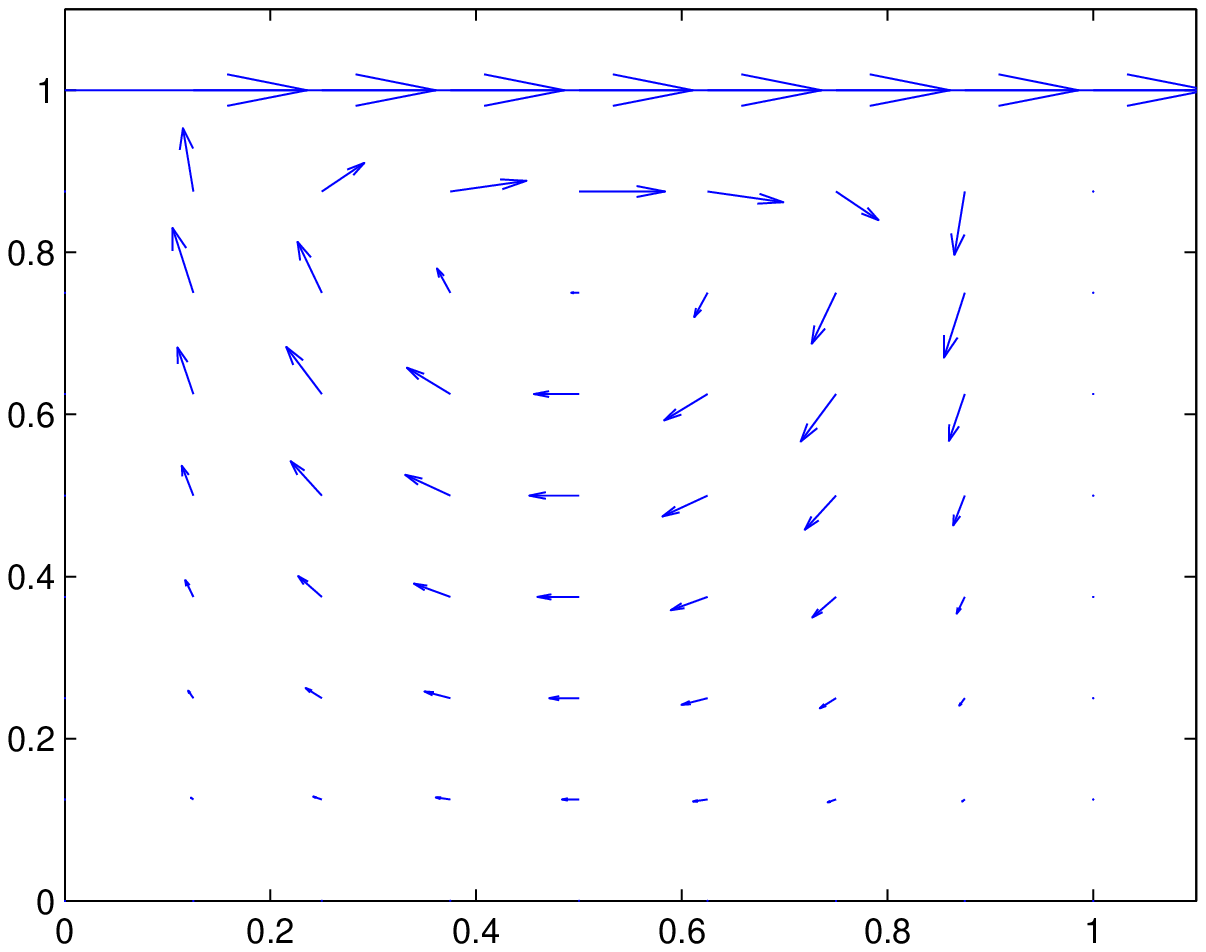}}
     &&
     \resizebox{5cm}{4.5cm}{\includegraphics{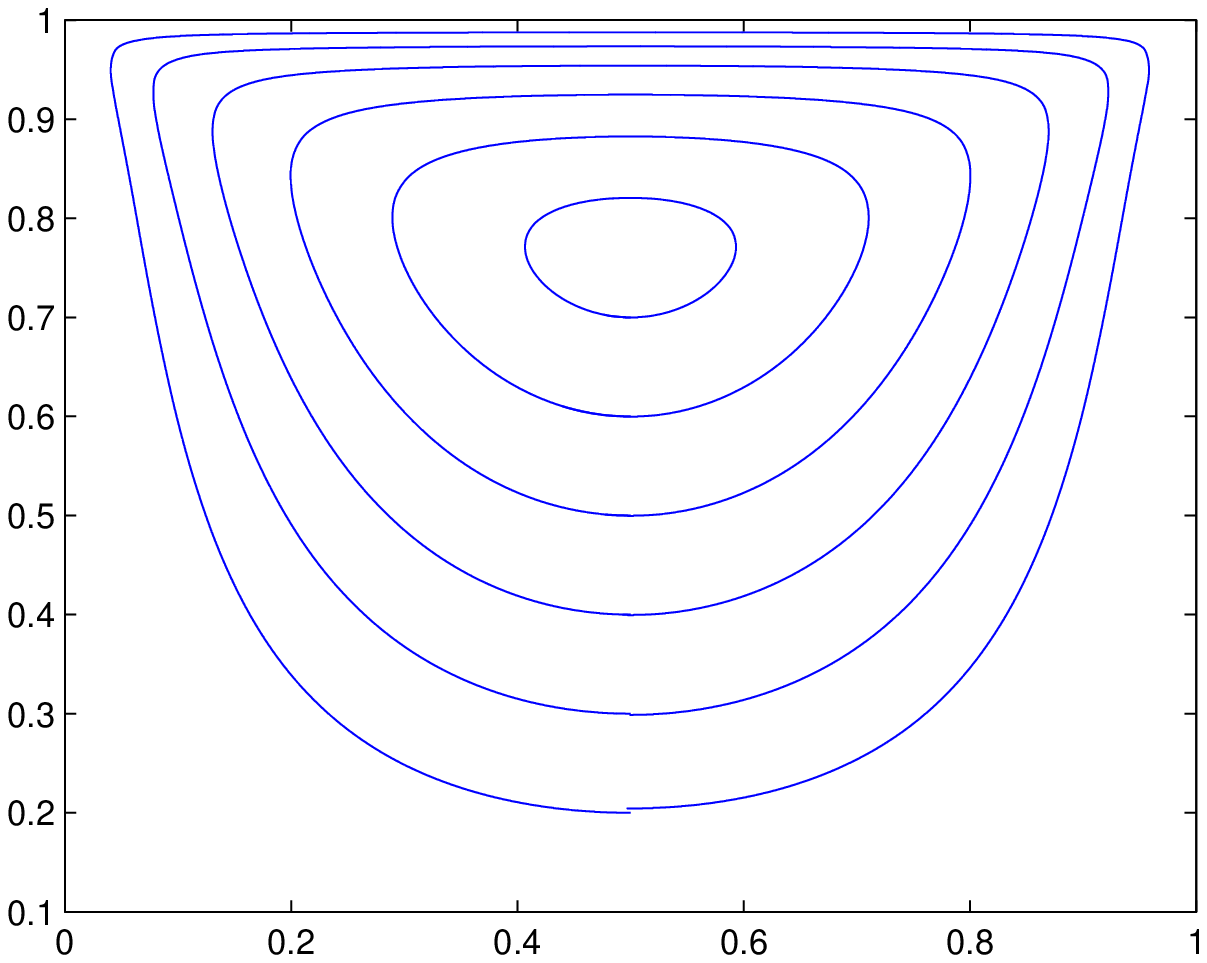}}
   \end{tabular}
   \caption{Left panel: The vectorgraph of velocity; Right panel:  The streamlines of velocity}
\end{center}
\end{figure}

\section{Appendix}

In this section, we will give some important inequalities.

\begin{lemma}{\rm(\cite{first,WangYe2014})}
Assume that $\mathcal{T}_h$, the finite element partition of $\Omega$, is shape regular. Let $\bu\in [H^{r+1}(\Omega)]^d$ and $p\in H^r(\Omega)$ with $1\le r\le k$. Then for any $0\le s\le 1$ we have
\begin{eqnarray}
\label{czi1}&\sum_{T\in\mathcal{T}_h}&h_T^{2s}\|\bu-Q_0\bu\|^2_{T,s}\le Ch^{2(r+1)}\|\bu\|^2_{r+1},
\\
\label{czi2}&\sum_{T\in\mathcal{T}_h}&h_T^{2s}\|\nabla\bu-\bQ(\nabla\bu)\|^2_{T,s}\le Ch^{2r}\|\bu\|^2_{r+1},
\\
\label{czi3}&\sum_{T\in\mathcal{T}_h}&h_T^{2s}\|p-\widetilde{Q}_0 p\|^2_{T,s}\le Ch^{2r}\|p\|^2_r,
\end{eqnarray}
where $C$ is a constant which is independent of the meshsize h and the functions.
\end{lemma}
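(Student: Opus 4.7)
The plan is to establish each bound elementwise and then sum. The common ingredient is the standard local approximation property of the $L^2$-projection onto polynomials of degree $m$ on a shape-regular polytopal element $T$: for any $v\in H^{m+1}(T)$ and $0\le s\le m+1$, if $\Pi_m$ denotes the $L^2$-projection onto $P_m(T)$, one has
\begin{equation*}
\|v-\Pi_m v\|_{s,T}\le C\, h_T^{m+1-s}\,|v|_{m+1,T}.
\end{equation*}
To derive this on a general polygon or polyhedron, I would invoke the shape-regularity hypothesis from \cite{first}, which supplies a sub-triangulation of $T$ by a uniformly bounded number of shape-regular simplices and provides star-shapedness with respect to a ball of radius comparable to $h_T$. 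These ingredients underpin a Bramble--Hilbert/Dupont--Scott style estimate in which the constant depends only on the shape-regularity parameter.

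Granted this local estimate, the three global bounds follow by squaring, multiplying by $h_T^{2s}$, using
\begin{equation*}
h_T^{2s}\cdot h_T^{2(m+1-s)}=h_T^{2(m+1)}\le h^{2(m+1)},
\end{equation*}
and summing over $T\in\mathcal{T}_h$. For (\ref{czi1}), take $m=k$ and apply to each component of $\bu\in[H^{r+1}(\Omega)]^d$; since $r\le k$, the local bound yields $\|\bu-Q_0\bu\|_{s,T}\le C h_T^{r+1-s}|\bu|_{r+1,T}$, and summation produces $Ch^{2(r+1)}\|\bu\|_{r+1}^2$. For (\ref{czi2}), take $m=k-1$ and apply the estimate componentwise to $\nabla\bu\in[H^r(\Omega)]^{d\times d}$; using $|\nabla\bu|_{r,T}\le|\bu|_{r+1,T}$ together with $r\le k$ gives the $O(h_T^{r-s})$ bound, hence $Ch^{2r}\|\bu\|_{r+1}^2$ after summing. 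For (\ref{czi3}), take $m=k-1$ and $v=p\in H^r(\Omega)$ with $r\le k$, yielding the $O(h^{2r})$ global bound.

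The main obstacle is really the first step: justifying the elementwise Bramble--Hilbert estimate on polytopes rather than on a single reference simplex. For simplicial or tensor-product meshes this is completely textbook, but on polytopal partitions one must either patch together affine maps on the sub-triangulation supplied by the shape regularity of \cite{first} or work directly on a star-shaped domain via the averaged Taylor polynomial of Dupont--Scott, ensuring that the resulting constants depend only on the chunkiness parameter and not on the particular element shape. Once this local bound is in hand, the assembly of (\ref{czi1})--(\ref{czi3}) is routine bookkeeping, and the factor $h_T^{2s}$ in the left-hand side is exactly what absorbs the negative power $h_T^{-2s}$ generated by the $s$-th derivative.
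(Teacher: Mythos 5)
Your argument is correct, and it is essentially the standard one: the paper itself offers no proof of this lemma, simply citing \cite{first,WangYe2014}, where the estimates are obtained exactly as you describe --- a local Bramble--Hilbert bound for the $L^2$ projection on shape-regular polytopal elements (using the star-shapedness and sub-triangulation guaranteed by the shape-regularity assumptions of \cite{first}), followed by the exponent bookkeeping $h_T^{2s}\cdot h_T^{2(m+1-s)}\le h^{2(m+1)}$ and summation over elements. Your choices of polynomial degree ($k$ for $Q_0$, $k-1$ for the projections of $\nabla\bu$ and $p$) and the resulting orders $h^{r+1}$ and $h^{r}$ match the statement, so nothing is missing.
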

\begin{lemma}{\rm(\cite{WangYe2014})}
Let T be an element of the finite element partition $\mathcal{T}_h$,
$e$ is an edge or face which is part of $\partial T$. For any
function $g\in H^1(T)$, the following trace inequality holds true
\begin{eqnarray}
\label{ti}\|g\|_e^2\le C(h_T^{-1}\|g\|^2_T+h_T\|\nabla g\|^2_T).
\end{eqnarray}
\end{lemma}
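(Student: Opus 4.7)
The plan is to prove the trace inequality by a standard scaling (homothety) argument that reduces the estimate on $T$ to a classical trace theorem on a unit-size reference configuration.

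First, I would invoke the continuous trace theorem on a reference element: for a Lipschitz domain $\hat T$ of unit diameter and a flat face $\hat e\subset\partial\hat T$, there exists a constant $C_{\hat T}$ depending only on $\hat T$ such that
$$\|\hat g\|_{\hat e}^2 \le C_{\hat T}\bigl(\|\hat g\|_{\hat T}^2 + \|\hat\nabla\hat g\|_{\hat T}^2\bigr)\qquad\forall\,\hat g\in H^1(\hat T).$$
Because $T$ is a polytope rather than a simplex, I would exploit the shape-regularity of $\mathcal{T}_h$ from \cite{first}, which guarantees that each $T\in\mathcal{T}_h$ can be decomposed into a uniformly bounded number of shape-regular sub-simplices of diameter comparable to $h_T$, each sharing its boundary face $e$ (or a subface of it) with $\partial T$. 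Thus it suffices to verify the inequality when $T$ is a shape-regular simplex with face $e$.

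Second, on such a simplex introduce the affine map $F:\hat T\to T$, $F(\hat x)=B\hat x+b$, where $\hat T$ is a fixed reference simplex. Shape-regularity implies the standard bounds $\|B\|\lesssim h_T$, $\|B^{-1}\|\lesssim h_T^{-1}$, and $|\det B|\sim h_T^d$. Setting $\hat g(\hat x)=g(F(\hat x))$, a routine change of variables gives the scaling relations
$$\|g\|_e^2\sim h_T^{d-1}\|\hat g\|_{\hat e}^2,\quad \|g\|_T^2\sim h_T^{d}\|\hat g\|_{\hat T}^2,\quad \|\nabla g\|_T^2\sim h_T^{d-2}\|\hat\nabla\hat g\|_{\hat T}^2.$$
Plugging these into the reference trace inequality and multiplying through by $h_T^{d-1}$ yields exactly
$$\|g\|_e^2\le C\bigl(h_T^{-1}\|g\|_T^2+h_T\|\nabla g\|_T^2\bigr),$$
with $C$ independent of $h_T$ (but depending on the shape-regularity constants).

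The main obstacle is not the scaling step, which is essentially algebraic once the bounds on $B$ are available, but rather the polytopal nature of $T$: a general polygon or polyhedron does not admit a single affine map to a reference element, so one must rely on the shape-regularity decomposition of \cite{first} to reduce to the simplicial case. An alternative, map-free route would use a divergence-theorem identity of the form $\int_{\partial T}g^2\,\psi\cdot\bn\,ds=\int_T\bigl(g^2\,\nabla\!\cdot\!\psi+2g\,\nabla g\cdot\psi\bigr)\,dx$ with a vector field $\psi(x)=x-x_0$ based at an interior point $x_0$ guaranteed by star-shapedness; this bypasses the sub-simplex decomposition but requires care in controlling $\psi\cdot\bn$ on the specific face $e$.
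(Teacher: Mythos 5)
Your argument is correct, and the first thing to note is that the paper itself contains no proof of this lemma: it is quoted directly from \cite{WangYe2014} (the same inequality also appears, with proof, in the appendix of \cite{first}), so the only meaningful comparison is with the cited references. Your scaling computation is sound --- the exponents $h_T^{d-1}$, $h_T^{d}$, $h_T^{d-2}$ combine exactly to give $h_T^{-1}$ and $h_T$ after dividing by $h_T^{d-1}$ --- and your identification of the polytopal geometry as the real issue is the right instinct. Two remarks. First, your reduction step is slightly overstated: you do not need a full decomposition of $T$ into sub-simplices all meeting $e$; it suffices to have, for the given face $e$, a single shape-regular simplex (or pyramid) $T_e\subset T$ with base $e$ and height comparable to $h_T$, since then $\|g\|_{T_e}\le\|g\|_T$ and $\|\nabla g\|_{T_e}\le\|\nabla g\|_T$ let you transfer the estimate from $T_e$ to $T$; this is precisely one of the shape-regularity assumptions of \cite{first}. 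Second, the proof actually given in \cite{first} is your ``alternative, map-free route'': one integrates $g^2$ along the height direction of that pyramid (equivalently, applies the divergence identity with a vector field transversal to $e$), which yields the inequality directly with constants depending only on the shape-regularity parameters and avoids both the reference trace theorem and the affine maps. Both routes are legitimate; the scaling route is shorter if one is willing to cite the trace theorem on $\hat T$, while the integration route is self-contained and makes the dependence of $C$ on the geometry completely explicit.
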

\begin{lemma}
Let $\bu\in[H_0^{k+1}(\Omega)]^d$ , $p\in H^k(\Omega)$, $\bv\in V_h$
and $q\in W_h$. The following estimates hold true
\begin{eqnarray}
\label{iqfs}|s(Q_h\bu,\bv)|&\le& Ch^k\|\bu\|_{k+1}\3bar\bv\3bar,
\\
\label{iqfl1}|l_1(\bu,\bv)|&\le&Ch^k\|\bu\|_{k+1}\3bar\bv\3bar,
\\
\label{1qfl3}|l_2(\bu, q)|&\le&Ch^k\|\bu\|_{k+1}\3barq\3bar_*,
\\
\label{iqfc}|c(\widetilde{Q}_hp,q)|&\le&Ch^k\|p\|_{k}\3barq\3bar_*.
\end{eqnarray}
\end{lemma}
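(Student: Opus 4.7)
The four inequalities share a common structure: each bilinear form is a sum of $L^{2}(\partial T)$ inner products, so the plan is to apply Cauchy--Schwarz once, identify one of the two factors as (a piece of) either $\3bar\bv\3bar$ or $\3bar q\3bar_{*}$, and then control the remaining factor by the trace inequality \eqref{ti} combined with the approximation estimates \eqref{czi1}--\eqref{czi3}. The one trick that appears in three of the four bounds is to exploit that $Q_{b}$ and $\widetilde{Q}_{b}$ are orthogonal projections that preserve the traces of $Q_{0}\bu$ and $\widetilde{Q}_{0}p$, respectively, so that quantities like $Q_{0}\bu-Q_{b}\bu$ can be rewritten as $Q_{b}(Q_{0}\bu-\bu)$ and estimated by $\|Q_{0}\bu-\bu\|_{\partial T}$.

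For \eqref{iqfs}, I would apply Cauchy--Schwarz to the boundary sum defining $s(Q_{h}\bu,\bv)$; the $\bv$-factor is exactly $\3bar\bv\3bar$, while the $\bu$-factor is $\bigl(\sum_{T}h_{T}^{-1}\|Q_{0}\bu-Q_{b}\bu\|_{\partial T}^{2}\bigr)^{1/2}$. Using $Q_{b}(Q_{0}\bu|_{e})=Q_{0}\bu|_{e}$ because $Q_{0}\bu|_{e}\in[P_{k}(e)]^{d}$, this reduces to $\bigl(\sum_{T}h_{T}^{-1}\|Q_{0}\bu-\bu\|_{\partial T}^{2}\bigr)^{1/2}$, which by the trace inequality \eqref{ti} is bounded by $\bigl(\sum_{T}h_{T}^{-2}\|Q_{0}\bu-\bu\|_{T}^{2}+\|\nabla(Q_{0}\bu-\bu)\|_{T}^{2}\bigr)^{1/2}$; the approximation estimate \eqref{czi1} with $s=0,1$ and $r=k$ then yields the $Ch^{k}\|\bu\|_{k+1}$ bound. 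Estimate \eqref{iqfl1} is even more direct: Cauchy--Schwarz on $l_{1}(\bu,\bv)$ gives the factor $\bigl(\sum_{T}h_{T}\|\nabla\bu-\bQ_{h}\nabla\bu\|_{\partial T}^{2}\bigr)^{1/2}$ against $\3bar\bv\3bar$, and applying \eqref{ti} followed by \eqref{czi2} with $s=0,1$ produces $Ch^{k}\|\bu\|_{k+1}$.

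Estimate \eqref{1qfl3} is obtained by pairing the factor $\bigl(\sum_{T}h_{T}^{-1}\|\bu-Q_{0}\bu\|_{\partial T}^{2}\bigr)^{1/2}$ (handled as in \eqref{iqfs} via \eqref{ti} and \eqref{czi1}) against $\bigl(\sum_{T}h_{T}\|q_{0}-q_{b}\|_{\partial T}^{2}\bigr)^{1/2}=\3bar q\3bar_{*}$. Finally, for \eqref{iqfc}, I would Cauchy--Schwarz $c(\widetilde{Q}_{h}p,q)$ to separate $\3bar q\3bar_{*}$ from $\bigl(\sum_{T}h_{T}\|\widetilde{Q}_{0}p-\widetilde{Q}_{b}p\|_{\partial T}^{2}\bigr)^{1/2}$; since $\widetilde{Q}_{0}p|_{e}\in P_{k-1}(e)\subset P_{k}(e)$, we have $\widetilde{Q}_{0}p-\widetilde{Q}_{b}p=\widetilde{Q}_{b}(\widetilde{Q}_{0}p-p)$, so this factor is controlled by $\bigl(\sum_{T}h_{T}\|\widetilde{Q}_{0}p-p\|_{\partial T}^{2}\bigr)^{1/2}$, and another application of \eqref{ti} together with \eqref{czi3} at $s=0,1$ and $r=k$ gives the desired $Ch^{k}\|p\|_{k}$.

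No step is technically deep; the only point that requires care is the verification in each case that the boundary-projection identity $Q_{b}\circ(\cdot|_{e})=\mathrm{id}$ holds on the relevant polynomial trace space, so that the auxiliary boundary term can be exchanged for the full approximation error. Once that is in place the estimates are a mechanical combination of Cauchy--Schwarz, \eqref{ti}, and \eqref{czi1}--\eqref{czi3}, and I would present them in the order \eqref{iqfs}, \eqref{iqfl1}, \eqref{1qfl3}, \eqref{iqfc} to keep the argument linear.
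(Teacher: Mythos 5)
Your proposal is correct and follows essentially the same route as the paper's proof: Cauchy--Schwarz on each boundary sum, the boundary-projection identity to trade $Q_0\bu-Q_b\bu$ (resp. $\widetilde{Q}_0p-\widetilde{Q}_bp$) for the full approximation error, then the trace inequality \eqref{ti} combined with \eqref{czi1}--\eqref{czi3}. The only cosmetic difference is in \eqref{iqfc}, where the paper drops $\widetilde{Q}_bp-p$ by orthogonality against $q_0-q_b\in P_k(e)$ rather than using the $L^2$-contraction of $\widetilde{Q}_b$ as you do; both are one-line equivalent steps (and the paper itself uses your version later in its duality argument).
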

\begin{proof}
It follows from the definition of $Q_b$ , (\ref{czi1}), and (\ref{ti}) that
\begin{equation*}
\begin{aligned}
|s(Q_h\bu,\bv)|&=\left|\sum_{T\in\mathcal{T}_h}h^{-1}_T(Q_0\bu-Q_b\bu,\bv_0-\bv_b)_{\partial T}\right|
\\
&\le \left(\sum_{T\in\mathcal{T}_h}h^{-1}_T\|Q_0\bu-\bu\|_{\partial T}^2\right)^{\frac12}\left(\sum_{T\in\mathcal{T}_h}h^{-1}_T\|\bv_0-\bv_b\|_{\partial T}^2\right)^{\frac12}
\\
&\le Ch^k\|\bu\|_{k+1}\3bar\bv\3bar.
\end{aligned}
\end{equation*}
From (\ref{czi2}) and (\ref{ti}), we would have
\begin{equation*}
\begin{aligned}
|l_1(\bu,\bv)|&=\left|\sum_{T\in\mathcal{T}_h}\langle\bv_0-\bv_b,(\nabla\bu-\bQ_h\nabla\bu)\cdot\bn\rangle_{\partial T}\right|
\\
&\le\left(\sum_{T\in\mathcal{T}_h}h^{-1}_T\|\bv_0-\bv_b\|_{\partial T}^2\right)^{\frac12}\left(\sum_{T\in\mathcal{T}_h}h_T\|\nabla\bu-\bQ_h\nabla\bu\|_{\partial T}^2\right)^{\frac12}
\\
&\le Ch^k\|\bu\|_{k+1}\3bar\bv\3bar.
\end{aligned}
\end{equation*}
Using (\ref{czi1}) and (\ref{ti}), we would arrive at
\begin{equation*}
\begin{aligned}
|l_2(\bu, q)|&=\left|\sum_{T\in\mathcal{T}_h}\langle q_0-q_b,(\bu-Q_0\bu)\cdot\bn\rangle_{\partial T}\right|
\\
&\le\left(\sum_{T\in\mathcal{T}_h}h_T\|q_0-q_b\|_{\partial T}^2\right)^{\frac12}\left(\sum_{T\in\mathcal{T}_h}h^{-1}_T\|Q_0\bu-\bu\|_{\partial T}^2\right)^{\frac12}
\\
&\le Ch^k\|u\|_{k+1}\3bar q\3bar_*.
\end{aligned}
\end{equation*}
From the definition of $Q_b$ and (\ref{czi3}), we obtain
\begin{equation*}
\begin{aligned}
|c(\widetilde{Q}_hp,q)|&=\left|\sum_{T\in\mathcal{T}_h}h_T\langle\widetilde{Q}_0p-\widetilde{Q}_bp,q_0-q_b\rangle_{\partial T}\right|
\\
&=\left|\sum_{T\in\mathcal{T}_h}h_T\langle\widetilde{Q}_0p-p,q_0-q_b\rangle_{\partial T}\right|
\\
&\le\left(\sum_{T\in\mathcal{T}_h}h_T\|\widetilde{Q}_0p-p\|_{\partial T}^2\right)^{\frac12}\left(\sum_{T\in\mathcal{T}_h}h_T\|q_0-q_b\|_{\partial T}^2\right)^{\frac12}
\\
&\le Ch^k\|p\|_{k}\3bar q\3bar_*.
\end{aligned}
\end{equation*}
Now we have proved all the estimates in this lemma.
\end{proof}

\end{document}